\documentclass{amsart}

\usepackage{color,graphicx,amssymb,latexsym,amsfonts,txfonts,amsmath,amsthm}
\usepackage{pdfsync}
\usepackage{amsmath,amscd}
\usepackage[all,cmtip]{xy}

\usepackage{hyperref}
\hypersetup{
    colorlinks=true,       % false: boxed links; true: colored links
    linkcolor=blue,          % color of internal links
    citecolor=blue,        % color of links to bibliography
    filecolor=blue,      % color of file links
    urlcolor=blue           % color of external links
}

\vfuzz2pt
\hfuzz2pt

\newtheorem{theo}{Theorem}
\newtheorem{coro}{Corollary}

\newtheorem{lemm}{Lemma}

\theoremstyle{remark}
\newtheorem{rema}{Remark}

\begin{document}

\title{Constructing Jacobian varieties with many elliptic curves}
\author{Ruben A. Hidalgo}
\address{Departamento de Matem\'atica y Estad\'{\i}stica, Universidad de La Frontera. Temuco, Chile}
\email{ruben.hidalgo@ufrontera.cl}
\thanks{Partially supported by Project Fondecyt 1190001}

\subjclass[2010]{30F10, 30F20, 14H40, 14H37}
\keywords{Riemann surface, Jacobian variety, Isogeny}

\begin{abstract}
Given two elliptic curves, $E_{1}$ and $E_{2}$, Earle provided an explicit genus two Riemann surface $R_{2}$ such that $JR_{2} \cong_{isog.} E_{1} \times E_{2}$.
In this paper, given $s \geq 3$ elliptic curves $E_{1},\ldots, E_{s}$,
we construct an explicit closed Riemann surface $R_{s}$, of genus $g=1+2^{s-2}(s-2)$, such that  $JR_{s} \cong_{isog.} E_{1} \times \cdots \times E_{s} \times A$, where $A$ is also a product of at least $s(s-3)/2$ elliptic curves and jacobian varieties of some hyperelliptic Riemann surfaces, all of these curves explicitly given in terms of the given elliptic curves. In particular, for every triple $E_{1}, E_{2}, E_{3}$ of elliptic curves this provides an explicit Riemann surface $R_{3}$ of genus three with  $JR_{3} \cong_{isog.} E_{1} \times E_{2} \times E_{3}$.

\end{abstract}

\maketitle

%%%%%%%%%%%%%%%
%%%%%%%%%%%%%%%
\section{Introduction}
Let $R$ be a closed Riemann surface of genus $g \geq 2$ and $JR$ be its jacobian variety. As $JR$ is a particular type of principally polarized abelian variety (p.p.a.v.), it follows from the Poincar\'e reducibility theorem, $JR\cong_{isog.}A_{1}^{n_{1}} \times \cdots \times A_{s}^{n_{s}}$ (the symbol $\cong_{isog.}$ stands for {\it isogenous}), where each $A_{j}$ is a simple abelian variety and each $n_{j}$ is a positive integer (the factors $A_{j}$ and the integers $n_{j}$ are uniquely determined up to isogeny and permutation). If $g \leq 4$, then it is possible to assume all these factors $A_{j}$ to be (isogenous to) jacobian varieties. For $g \geq 5$, this is in general not possible (in fact, it might be that $JR$ is a non-simple p.p.a.v., but not isogenous to a product of at least two jacobian varieties).

In recent years there has been an interest in constructing examples of closed Riemann surfaces whose jacobian varieties are isogenous to a product with many elliptic factors (see, for instance,  \cite{Earle, Earle2, HN, Kani, Nakajima, Magaard, Paulhus1, Paulhus2, PR, RR, CRR, Yamauchi}). Such an interest come from different contexts, for instance, in cryptography \cite{BL, DK} and in arithmetics and number theory \cite{Cardona, FK, CRR}.  For many values of $g \leq 1297$, 
in \cite{E-S}, Ekedahl and Serre constructed examples of closed Riemann surfaces of genus $g$ whose jacobian varieties are  isogenous to a product of $g$ elliptic curves (completely reduced). In \cite{PR} there has been constructed more such kind of examples for some of the missing genera.  It is still not known if such kind of examples exist in every genus or even if they exist for infinitely many values of $g$. In this paper we are interested in the following (which is somehow related to the above) questions.

%\medskip

{\bf Q:} {\it 
(1) Let $E_{1},\ldots, E_{r}$, where $r \geq 2$, be elliptic curves.  Find explicit equations (in terms of these elliptic curves) of a closed Riemann surface $R$ of minimal genus $e(E_{1},\ldots,E_{r})$ with $JR \cong_{isog.}E_{1} \times \cdots \times  E_{r} \times A$, where $A$ is also a product of jacobian varieties. (2) Determine the maximum $e(r)$ of all these minimal genera obtained by varying all the $r$ elliptic curves. }

%\medskip

Observe that $e(E_{1},\ldots,E_{r}) \geq r$ and equality works for Ekedahl-Serre's examples. We may think of $e(E_{1},\ldots,E_{r})-r$ as the defect for $Z=E_{1} \times \cdots \times E_{r}\in {\mathcal H}_{r}$ (where  ${\mathcal H}_{r}$ is the upper-half Siegel space) to be isogenous to a jacobian variety.  The 
${\rm Sp}(2r,{\mathbb Q})$-orbit of $Z$ is dense and any point in this orbit is isogenous to $Z$.
As the dimension of ${\mathcal H}_{r}$ is $r(r+1)/2$ and that of its 
jacobian sublocus is $3(r-1)$,  it follows that, for $r \in \{2,3\}$, there is a dense set of jacobians being isogenous to $Z$. This, in particular, asserts that $e(E_{1},E_{2})=2$ and $e(E_{1},E_{2},E_{3})=3$. (For every $r$ this also asserts that the jacobian variety of every surface of genus $r$ is a limit point of the ${\rm Sp}(2r,{\mathbb Q})$-orbit of $Z$, in particular, that ${\rm Sp}(2r,{\mathbb Q})$-translates of ${\mathcal H}_{1}^{r}$ provides a dense subset.) 
Given any two elliptic curves $E_{1}$, $E_{2}$, Earle \cite{Earle,Earle2} (see also  \cite{Gaudry, HN, Hermite,Magaard}) described an explicit equation of a genus two  Riemann surface $R_{2}$, in terms of the two given elliptic curves, such that $JR_{2}\cong_{isog.}E_{1} \times E_{2}$ (we recall this in Theorem \ref{r=2} for completeness). In this paper, given three elliptic curves $E_{1}, E_{2}, E_{3}$, we describe explicitly a genus three closed Riemann surface $R_{3}$, in terms of the given elliptic curves, such that $JR_{3}\cong_{isog.}E_{1} \times E_{2} \times E_{3}$ (Theorem \ref{coro1}).  

In Section \ref{Sec:preliminaries} we recall a method due to Kani and Rosen \cite{K-R} for finding isogenous decompositions of the jacobian varieties of closed Riemann surfaces admitting non-trivial groups of automorphisms.  In Section \ref{Sec:fiberproducts} we apply such a method to two explicit constructions. In the first construction, 
given  $s \geq 3$ elliptic curves $E_{1}, \ldots E_{s}$, we obtain an explicit 
closed Riemann surface ${R}_{s}$, of genus $g=1+2^{s-2}(s-2)$, such that $JR_{s}\cong_{isog.}E_{1}\times \cdots \times E_{s} \times A$, where 
$A$ is a product of at least $s(s-3)/2$ elliptic curves and some jacobian varieties of hyperelliptic curves, each of them explicitly in terms of the given $s$ elliptic curves (Theorem \ref{construccion}). This permits to observe that: (i) $e(r) \leq 1+2^{(r-2)/2}r$, for $r \geq 4$ even,  and (ii) $e(r) \leq 1+2^{(r-3)/2}(r-1)$, for $r \geq 5$ odd (Theorem \ref{coro2}). We conjecture these inequalities to be sharp, but unfortunately we do not have a reasonable reason for it.
Our constructions also allows us to produce (i) an explicit  $2$-dimensional family of Riemann surfaces of genus nine whose jacobian varieties are isogenous to the product of nine elliptic curves (Example \ref{coro3}),  (ii) a $2$-dimensional family of Riemann surfaces of genus five and (iii) a $1$-dimensional family of Riemann surfaces of genus thirteen, whose jacobian varieties are isogenous to the product of only elliptic curves (Examples \ref{g=5} and \ref{g=13}).

%%%%%%%%%%%%%%%%%
%%%%%%%%%%%%%%%%%
\section{Some preliminaries}\label{Sec:preliminaries}
%%%%%%%%%%%%%%
\subsection{Some notation and remarks}
Let $\Delta_{1}={\mathbb C}-\{0,1\}$ and, for $s \geq 2$, we set $\Delta_{s}=\{(\lambda_{1},\ldots,\lambda_{s}) \in {\mathbb C}^{s}: \lambda_{j} \in \Delta_{1};\; \lambda_{i} \neq \lambda_{j},\; i \neq j\}$.  
If $\lambda \in \Delta_{1}$, then we set the elliptic curve
$E_{\lambda}: y^{2}=x(x-1)(x-\lambda)$. It is known that
$E_{\lambda}$ and $E_{\mu}$ are isomorphic if and only if there is some $T \in 
{\mathbb G}=\langle u(\lambda)=1/\lambda, V(\lambda)=1-\lambda\rangle \cong {\mathfrak S}_{3}$ so that $\mu=T(\lambda)$.
It can be seen that given two elliptic curves $E_{1}$ and $E_{2}$, there is some $(\lambda_{1},\lambda_{2}) \in \Delta_{2}$ such that $E_{j}$ and $E_{\lambda_{j}}$ are isomorphic for $j=1,2$. 
As for every $\lambda \in \Delta_{1}$ there exist infinitely many values $\mu \in \Delta_{1}$ (in fact a dense subset) so that $E_{\mu}$ and $E_{\lambda}$ are isogenous, it follows  that given $s \geq 3$ elliptic curves $E_{1},\ldots, E_{s}$, there are infinitely many tuples $(\lambda_{1},\ldots,\lambda_{s}) \in \Delta_{s}$ so that $E_{\lambda_{j}}$ and $E_{j}$ are isogenous for each $j=1,\ldots,s$.

%%%%%%%%%%%%%%%%%%%%%%
\subsection{The jacobian variety of a closed Riemann surface}
A {\it polarized abelian variety} of dimension $g \geq 1$ is a pair $A=(T,Q)$, where $T={\mathbb C}^{g}/L$ is a complex torus of dimension $g$ and $Q$ (called a {\it polarization} of $A$) is a positive-definite Hermitian product in ${\mathbb C}^{g}$ such that ${\rm Im}(Q)$ has integral values over elements of the lattice $L$. There is basis of $L$ for which ${\rm Im}(Q)$ can be represented by the matrix
$$\left( \begin{array}{cc}
0 & D\\
-D & 0
\end{array}
\right)
$$
where $D$ is a diagonal matrix with diagonal entries given by non-negative integers $d_{1},\ldots,d_{g}$, where $d_{j}$ divides $d_{j+1}$. The tuple $(d_{1},\ldots,d_{g})$ is called the {\it polarization type}. When $d_{1}=\cdots=d_{g}=1$, the polarization is called {\it principal} and that the abelian variety is {\it principally polarized}. 
(Not every torus $T$ can be given the structure of a polarized abelian variety; a necessary and sufficient condition is the existence of a holomorphic embedding of $T$ into a complex projective space.) 
Two tori $T_{1}={\mathbb C}^{g}/L_{1}$ and $T_{2}={\mathbb C}^{g}/L_{2}$ are called {\it isogenous} ($T_{1} \cong_{isog.} T_{2}$) if there exits a 
non-constant surjective morphism (i.e., holomorphic and also a group homomorphism) $h:T_{1} \to T_{2}$  (so it has a finite kernel); $h$ is called an {\it isogeny}. 
 In particular, we may talk of {\it isogenous abelian varieties} (the polarization plays no role in this definition).
Let $R$ be a closed Riemann surface of genus $g \geq 1$. Its first homology group $H_{1}(R,{\mathbb Z})$ is isomorphic to ${\mathbb Z}^{2g}$ and its complex vector space $H^{1,0}(R)$ of its holomorphic $1$-forms is isomorphic to ${\mathbb C}^{g}$. There is a natural injective map
$\iota:H_{1}(R,{\mathbb Z}) \hookrightarrow \left( H^{1,0}(R) \right)^{*}: \alpha \mapsto \int_{\alpha} \cdot \;,$
where $\left( H^{1,0}(R) \right)^{*}$ denotes the dual space of $H^{1,0}(R)$.
The image $\iota(H_{1}(R,{\mathbb Z}))$ is a lattice in $\left( H^{1,0}(R) \right)^{*}$ and the quotient $g$-dimensional torus 
$JR=\left( H^{1,0}(R) \right)^{*}/\iota(H_{1}(R,{\mathbb Z}))$
is called the {\it jacobian variety} of $R$.  The intersection product in $H_{1}(R,{\mathbb Z})$ induces a principal polarization on $JR$; that is, $JR$ is a principally polarized abelian variety. 

%%%%%%%%%%%%%%%%%%
\subsection{Kani-Rosen's decomposition result}
An abelian variety is called {\it simple} if it is not isogenous to a product of at least two abelian varieties of smaller dimensions. Poincar\'e irreducibility theorem asserts that for every abelian variety $A$ there 
exist simple polarized abelian varieties $A_{1}, \ldots, A_{s}$ and positive integers $n_{1},\ldots,n_{s}$ such that $A\cong_{isog.}A_{1}^{n_{1}} \times \cdots A_{s}^{n_{s}}$. Moreover, the factors $A_{j}$ and the integers $n_{j}$ are unique up to isogeny and permutation of the factors.
In general, to describe these simple factors seems to be a very difficult problem. When the abelian variety $A$ admits a non-trivial group $G$ of automorphisms, then there is a method to compute factors (non-necessarilly simple ones) by using the rational representations of $G$ (the group algebra decomposition) \cite{CR, LR, Anita}. In the particular case that $A=JR$, for $R$ a closed Riemann surface, the following decomposition, in terms of automorphisms of $R$, is due to Kani and Rosen.

\begin{theo}[Kani-Rosen's decomposition \cite{K-R}]\label{coroKR}
Let $R$ be a closed Riemann surface of genus $g \geq 1$ and let $H_{1},\ldots,H_{s}<{\rm Aut}(R)$ such that:
(1) $H_{i} H_{j}=H_{j} H_{i}$, for all $i,j =1,\ldots,s$;
(2) $g_{H_{i}H_{j}}=0$, for $1 \leq i < j \leq s$; and 
(3) $g=\sum_{j=1}^{s} g_{H_{j}}$.
Then 
$JR \cong_{isog.}  JR_{H_{1}} \times \cdots \times JR_{H_{s}}.$
\end{theo}

%%%%%%%%%%%%%%%%%%
\subsection{Generalized Humbert curves}\label{Sec:Humbert}
The constructions of the corresponding Riemann surfaces of Theorems \ref{construccion} and \ref{construccion2} will be suitable quotients of certain types of Riemann surfaces called generalized Humbert curves. We proceed to recall them and some of their properties (details can be found in \cite{C-G-H-R}).

Let $S$ be a closed Riemann surface. We say that $S$ is a 
{\it generalized Humbert curve of type $n \geq 2$} if it admits a group $H \cong {\mathbb Z}_{2}^{n}$ of conformal automorphisms such that $S/H$ has genus zero (in particular, it has exactly $n+1$ cone points, each one of order two). The group $H$ is 
called a {\it generalized Humbert group of type $n$}. By the Riemann-Hurwitz formula, the genus of $S$ is $g_{n}=1+ 2^{n-2}(n-3)$. If $n \geq 4$, then: (i) $g_{n} \geq 5$, (ii) $S$ is non-hyperelliptic \cite{C-G-H-R} and (iii) it has a unique generalized Humbert group of type $n$ \cite{H-K-L-P}. 
Let $\pi:S \to \widehat{\mathbb C}$ be a branched regular covering whose deck group is $H$. By post-composing it by a suitable M\"obius transformation, we may assume its branch values to be given by $\infty, 0,1, \lambda_{1}, \ldots, \lambda_{n-2}$, where $\lambda_i \in {\mathbb C}-\{0,1\}$ are pairwise different. It can be observed that the following is a non-singular complex projective algebraic curve

$$
C(\lambda_{1},\ldots,\lambda_{n-2}):=\left \{ \begin{array}{rcc}
x_{1}^{2}+x_{2}^{2}+x_{3}^{2}&=&0\\
\lambda_{1}x_{1}^{2}+x_{2}^{2}+x_{4}^{2}&=&0\\
\vdots\hspace{1cm} &\vdots &\vdots\\
\lambda_{n-2}x_{1}^{2}+x_{2}^{2}+x_{n+1}^{2}&=&0\\
\end{array}\right \}\subset {\mathbb P}^{n}.
$$

\begin{rema} \label{remark1}
Set $\lambda_{0}=1$ and, for $j \in \{0,1,\ldots,n-2\}$, let $C_{j}:=\{ \lambda_{j}x_{1}^{2}+x_{2}^{2}+x_{3+j}^{2}=0 \} \subset {\mathbb P}^{2}$ be the classical degree two Fermat curve. The rational map $\pi_{j}:C_{j} \to \widehat{\mathbb C}$, defined by $\pi_{j}([x_{1}:x_{2}:x_{3+j}])=-(x_{2}/x_{1})^{2}$, is a branched regular covering with deck group $H_{j}=\langle a([x_{1}:x_{2}:x_{3+j}])=[-x_{1}:x_{2}:x_{3+j}], b([x_{1}:x_{2}:x_{3+j}])=[x_{1}:-x_{2}:x_{3+j}]\rangle \cong {\mathbb Z}_{2}^{2}$, whose branch values are  $\infty$, $0$ and $\lambda_{j}$. If we consider the fiber product of all these curves, with the given maps, we obtain a
reducible projective algebraic curve with $2^{n-2}$ irreducible components (all of them isomorphic to $C(\lambda_{1},\ldots,\lambda_{n-2})$).
\end{rema}

If we consider the linear automorphisms of ${\mathbb P}^{n}$ given by 
\begin{center}
 $b_{j}([x_{1}:\cdots:x_{n+1}]):=[x_{1}:\cdots:x_{j-1}:-x_{j}:x_{j+1}:\cdots:x_{n+1}]$,
 \end{center}
then $b_{1} b_{2}  \cdots b_{n+1}=1$ and 
$H_{0}=\langle b_{1},\ldots,b_{n}\rangle \cong {\mathbb Z}_{2}^{n}$ is a subgroup of ${\rm Aut}(C(\lambda_{1},\ldots,\lambda_{n-2}))$.
The set of fixed points of  $b_{j}$ in $C(\lambda_{1},\ldots,\lambda_{n-2})$ is given by the intersection 
$${\rm Fix}(b_{j}):=\{x_j=0\} \cap C(\lambda_{1},\ldots,\lambda_{n-2}),$$
which is of cardinality $2^{n-1}$. The only non-trivial elements of $H_{0}$ acting with fixed points on $C(\lambda_{1},\ldots,\lambda_{n-1})$ are the non-trivial powers of the elements $b_{1}, \ldots, b_{n+1}$.

The map 
$\pi_{0}:C(\lambda_{1},\ldots,\lambda_{n-2}) \to \widehat{\mathbb C}: [x_{1}:\cdots:x_{n+1}] \mapsto -\left(x_{2}/x_{1}\right)^{2}$
is a regular branched cover with deck group $H_{0}$ and whose branch values are $\infty, 0,1, \lambda_{1}, \ldots, \lambda_{n-2}$, each one of order two. In other words, $C(\lambda_{1},\ldots,\lambda_{n-2})$ is a generalized Humbert curve of type $n$ and $H_{0}$ is its generalized Fermat group of type $n$. 
It was noticed in \cite{C-G-H-R} that 
$S$ and $C(\lambda_{1},\ldots,\lambda_{n-2})$ are isomorphic Riemann surfaces.

\begin{rema}
Let ${\mathcal F}$ be the collection of all subsets of $\{\infty,0,1,\lambda_{1},\ldots,\lambda_{n-2}\}$ of even cardinality at least four. For each $I \in {\mathcal F}$ we set $R_{I}$ the Riemann surface defined by the algebraic curve $y^{2}=\prod_{\alpha \in I}(x-\alpha)$ (if $\alpha=\infty$, then we delete the factor $(x-\alpha)$). If $|I|=4$, then $R_{I}$ is an elliptic curve and if $|I|=2m \geq 6$, then $R_{I}$ is a hyperelliptic Riemann surface of genus $g=m-1$. In \cite{CHQ} (as an application of Kani-Rosen's result)  it was observed that $JS \cong_{isog.} \prod_{I \in {\mathcal F}} JR_{I}$.
\end{rema}

%%%%%%%%%%%%%%
\subsection{The known genus two situation}
As already said in the introduction, given two elliptic curves $E_{1}$ and $E_{2}$, there is a closed Riemann surface $R_{2}$ of genus two with $JR_{2}$ isogenous to $E_{1} \times E_{2}$. This can be tracked back to \cite{Earle, Earle2, Gaudry, HN, Hermite}. 

\begin{theo}\label{r=2}
Let $E_{1}$ and $E_{2}$ two elliptic curves and let 
$(\lambda_{1},\lambda_{2}) \in \Delta_{2}$ be so that $E_{j}$ is isomorphic to $E_{\lambda_{j}}$, for $j=1,2$. If
$\eta_{1}=(\lambda_{1}-1)/(\lambda_{2}-1)$, $\eta_{2}=\lambda_{2}(\lambda_{1}-1)/\lambda_{1}(\lambda_{2}-1)$, and $R_{2}$ is the genus two Riemann surface defined by the hyperelliptic curve $y^{2}=(x^{2}-1)(x^{2}-\eta_{1})(x^{2}-\eta_{2})$, then $JR_{2}\cong_{isog.}E_{1} \times E_{2}$. 
\end{theo}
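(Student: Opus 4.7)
The plan is to exhibit a Klein four subgroup $V = \langle \tau, \iota\rangle \subset \operatorname{Aut}(S)$ containing the hyperelliptic involution $\iota$, verify that both non-trivial cosets other than $\langle\iota\rangle$ produce elliptic quotients, identify those quotients (up to isomorphism) with $E_{\lambda_{1}}$ and $E_{\lambda_{2}}$, and appeal to the standard Jacobian decomposition
\[
JS \ \sim\ J(S/\langle\tau\rangle)\ \times\ J(S/\langle\tau\iota\rangle),
\]
which holds because the remaining $V$-quotient $S/\langle\iota\rangle \cong \mathbb P^{1}$ contributes nothing.

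The first half of the plan is immediate. Since the defining sextic is even in $x$, the map $\tau(x,y) = (-x,y)$ is a holomorphic involution of $S$ distinct from $\iota(x,y)=(x,-y)$, and the two commute. A brief fixed-point count (namely $\tau$ is fixed only at the two points above $x=0$, and $\tau\iota$ is fixed only at the two points above $x=\infty$) combined with Riemann--Hurwitz gives $g(S/\langle\tau\rangle)=g(S/\langle\tau\iota\rangle)=1$. The quotients can then be written down using the obvious invariants: taking $u=x^{2}$, $v=y$ for $\tau$ yields the Weierstrass model
\[
S/\langle\tau\rangle: \ v^{2}=(u-1)(u-\eta_{1})(u-\eta_{2}),
\]
while $u=x^{2}$, $w=y/x$ for $\tau\iota$ yields $u w^{2}=(u-1)(u-\eta_{1})(u-\eta_{2})$, which after $t=1/u$, $W=wt$ becomes
\[
S/\langle\tau\iota\rangle: \ W^{2}=(1-t)(1-\eta_{1}t)(1-\eta_{2}t).
\]

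The real content of the proof is then to verify that the Legendre parameters of these two elliptic curves lie, respectively, in the $\mathbb{G}$-orbits of $\lambda_{1}$ and $\lambda_{2}$. Using a cross-ratio of the three finite branch values, one finds that the first model has Legendre parameter $(\eta_{2}-1)/(\eta_{1}-1)$ and the second $\eta_{1}(1-\eta_{2})/[\eta_{2}(1-\eta_{1})]$; direct substitution of the definitions of $\eta_{1},\eta_{2}$ simplifies these to $1/\lambda_{1}$ and $1/\lambda_{2}$ respectively. Since $1/\lambda \in \mathbb{G}\cdot\lambda$, this identifies $S/\langle\tau\rangle$ with $E_{\lambda_{1}}$ and $S/\langle\tau\iota\rangle$ with $E_{\lambda_{2}}$, and the decomposition above together with $E_{\lambda_{j}}\sim E_{j}$ then yields $JS \sim E_{1}\times E_{2}$. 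The specific formulas for $\eta_{1}$ and $\eta_{2}$ in the statement are exactly what make these cross-ratios collapse so cleanly; this cross-ratio identification is the only obstacle, and it is routine algebra.
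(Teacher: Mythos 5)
Your proof is correct, and it rests on exactly the same decomposition as the paper's: the Klein four-group generated by the hyperelliptic involution $\iota$ and $\tau(x,y)=(-x,y)$, with the two elliptic quotients $S/\langle\tau\rangle$ and $S/\langle\tau\iota\rangle$ combined via the Kani--Rosen relation (Proposition \ref{coroKR}, applied with $H_{1}=\langle\tau\rangle$, $H_{2}=\langle\tau\iota\rangle$, where $g_{H_{1}H_{2}}=0$ and $g_{H_{1}}+g_{H_{2}}=2=g$). The difference is purely one of direction. The paper starts from the fiber product $C$ of $(E_{\lambda_{1}},\pi_{1})$ and $(E_{\lambda_{2}},\pi_{2})$, so the two elliptic quotients are $E_{\lambda_{1}}$ and $E_{\lambda_{2}}$ by construction, and its work consists of normalizing $C$ into the even sextic $y^{2}=(x^{2}-1)(x^{2}-\rho_{1}^{2})(x^{2}-\rho_{2}^{2})$ and solving for $\rho_{1}^{2}=\eta_{1}$, $\rho_{2}^{2}=\eta_{2}$ by a M\"obius normalization of branch points; you instead take the stated equation as given, compute the two quotients explicitly, and identify their Legendre parameters by cross-ratios (your values $1/\lambda_{1}$ and $1/\lambda_{2}$ check out, and both lie in the required ${\mathbb G}$-orbits). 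Your version is more self-contained as a verification of the theorem exactly as stated; the paper's version additionally explains where the formulas for $\eta_{1},\eta_{2}$ come from and exhibits $S$ as the fiber product of the two given curves, which is the viewpoint that the paper goes on to generalize in Theorems \ref{construccion} and \ref{construccion2}. One terminological slip worth fixing: the objects producing the elliptic quotients are the subgroups $\langle\tau\rangle$ and $\langle\tau\iota\rangle$ of $V$, not ``cosets''.
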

\begin{proof}
Consider the genus two curve 
$C: \; y^{2}=(x^{2}-1)(x^{2}-\eta_{1})(x^{2}-\eta_{2})$
and its order two automorphisms 
$a_{1}(x,y)=(-x,y)$ and $a_{2}(x,y)=(-x,-y)$.
Then $H=\langle a_{1},a_{2}\rangle \cong {\mathbb Z}_{2}^{2}$ and $a_{2} a_{1}$ is the hyperelliptic involution. Set $H_{1}=\langle a_{1}\rangle$ and $H_{2}=\langle a_{2}\rangle$. If we consider the elliptic curves 
$F_{1}:\; y^{2}=(x-1)(x-\eta_{1})(x-\eta_{2})$ and $F_{2}:\; y^{2}=x(x-1)(x-\eta_{1})(x-\eta_{2})$,
then $P_{j}:C \to F_{j}$, where $P_{1}(x,y)=x^{2}$ and $P_{2}(x,y)=(x^{2},xy)$, are two-fold branched coverings with respective deck groups $H_{1}$ and $H_{2}$. As $H_{1}H_{2}=H=H_{2}H_{1}$ and $C/H$ is the Riemann sphere (with exactly $5$ cone points, these being $\infty$, $0$, $1$, $\eta_{1}$ and $\eta_{2}$), we may apply Kani-Rosen's result (Theorem \ref{coroKR}) to obtain that $JC \cong_{isog.} F_{1} \times F_{2}$. As, for $L(x)=(1-\eta_{1})(x-\eta_{2})/(1-\eta_{2})(x-\eta_{1})$, one has that $L(1)=1$, $L(\eta_{1})=\infty$, $L(\eta_{2})=0$, $L(\infty)=(1-\eta_{1})/(1-\eta_{2})=\lambda_{1}$ and
$L(0)=\lambda_{2}$, we observe that $E_{1} \cong F_{1}$ and $E_{2} \cong F_{2}$.
\end{proof}

\begin{rema}[The curve $R_{2}$ in terms of fiber products]\label{observa2}
The fiber product of $(E_{\lambda_{1}},\pi_{1})$ and $(E_{\lambda_{2}},\pi_{2})$, where $\pi_{j}(x,y)=x$, (in projective coordinates) is 
$$X_{2}:=\left\{ ([x_{1}:y_{1}:z_{1}],[x_{2}:y_{2}:z_{2}]) \in {\mathbb P}^{2} \times {\mathbb P}^{2}: \; x_{1}/z_{1}=x_{2}/z_{2}, \right.$$
$$\left.
y_{1}^{2}z_{1}=x_{1}(x_{1}-z_{1})(x_{1}-\lambda_{1}z_{1}), \quad y_{2}^{2}z_{2}=x_{2}(x_{2}-z_{2})(x_{2}-\lambda_{2}z_{2}) \right\}.$$

This is an irreducible algebraic curve with exactly $3$ nodes, these being at the points $\infty:=([0:1:0],[0:1:0])$, $p:=([0:0:1],[0:0:1])$ and $q:=([1:0:1],[1:0:1])$. After desingularization, this produces a genus two Riemann surface $R$, and each of these three special points induces two different points ($\infty_{1}$, $\infty_{2}$, $p_{1}$, $p_{2}$, $q_{1}$ and $q_{2}$) on $R$.
On $X_{2}$ we have the order two automorphisms
$$a_{1}([x_{1}:y_{1}:z_{1}],[x_{2}:y_{2}:z_{2}])=([x_{1}:-y_{1}:z_{1}],[x_{2}:y_{2}:z_{2}]),$$ 
$$a_{2}([x_{1}:y_{1}:z_{1}],[x_{2}:y_{2}:z_{2}])=([x:y_{1}:z_{1}]),[x_{2}:-y_{2}:z_{2}]),$$ 
with $H=\langle a_{1}, a_{2} \rangle \cong {\mathbb Z}_{2}^{2}$. The  projection $\pi:X_{2} \to \widehat{\mathbb C}:([x_{1}:y_{1}:z_{1}],[x_{2}:y_{2},z_{2}]) \mapsto x_{1}/z_{1}$, is a Galois branched cover with 
$H$ as its deck group. The branch values of $\pi$ are $\infty$, $0$, $1$, $\lambda_{1}$ and $\lambda_{2}$. 
The Riemann surface $R$ has genus two and admits the group $H$  as group of conformal automorphisms. If $H_{j}=\langle a_{j} \rangle$, it can be seen that the orbifold $R/H_{j}$ has underlying Riemann surface $E_{\lambda_{3-j}}$. The surface $R$ is another model for the genus two Riemann surface $R_{2}$ described in the above theorem.
\end{rema}

%%%%%%%%%%%%%%%
%%%%%%%%%%%%%%%
\section{The constructions}\label{Sec:fiberproducts}

%%%%%%%%%%
\subsection{First construction}
Let us assume $s \geq 3$ and consider the set of cardinality $2^{s-1}-1$
$$V_{s}=\{\alpha=(\alpha_{1},\ldots,\alpha_{s}) \in \{0,1\}^{s}-\{(0,\ldots,0)\}: \alpha_{1}+\cdots+\alpha_{s} \; \mbox{is even}\}.$$

If $P:=(\lambda,\mu_{1,1},\mu_{1,2},\mu_{2,1},\mu_{2,2},\ldots,\mu_{s-2,1},\mu_{s-2,2}) \in \Delta_{2s-3}$ and $\alpha=(\alpha_{1},\ldots,\alpha_{s}) \in V_{s}$, then we set 
the following elliptic curves 
$$
E_{1}(P):\; y^{2}=x(x-1)(x-\lambda), \;
E_{2}(P):\; y^{2}=(x-1)(x-\lambda)(x-\mu_{1,1})(x-\mu_{1,2}),
$$
$$
E_{3}(P): \; y^{2}=(x-\mu_{1,1})(x-\mu_{1,2})(x-\mu_{2,1})(x-\mu_{2,2}), \;
E_{4}(P):\; y^{2}=(x-\mu_{2,1})(x-\mu_{2,2})(x-\mu_{3,1})(x-\mu_{3,2}),
$$
$$\vdots$$
$$
E_{s-1}(P):\; y^{2}=(x-\mu_{s-3,1})(x-\mu_{s-3,2})(x-\mu_{s-2,1})(x-\mu_{s-2,2}), \;
E_{s}(P):\; y^{2}=x(x-\mu_{s-2,1})(x-\mu_{s-2,2}),
$$
the complex numbers
$\eta_{0}=-(\mu_{s-2,2})^{-1}$, $\eta_{1}=(1-\mu_{s-2,2})^{-1}$, 
$\eta_{2}=(\lambda-\mu_{s-2,2})^{-1}$, $\eta_{3}=(\mu_{s-2,1}-\mu_{s-2,2})^{-1}$ and, for each $t=1,2$ and $j=1,\ldots,s-3$,  
$\eta_{j,t}=(\mu_{j,t}-\mu_{s-2,2})^{-1}$,
and the following non-zero complex number
$$K_{\alpha}=(-\mu_{s-2,2})^{\alpha_{1}} (\mu_{s-2,2}-1)^{\alpha_{2}}(\mu_{s-2,2}-\lambda)^{\alpha_{2}} (\mu_{s-2,2}-\mu_{s-2,1})^{\alpha_{s}} \prod_{k=1}^{s-3} (\mu_{s-2,2}-\mu_{k,1})^{\alpha_{k+2}}(\mu_{s-2,\mu_{k,2}}-\lambda)^{\alpha_{k+2}}.$$

\begin{theo}\label{construccion}
Let $s \geq 3$ and $P=(\lambda,\mu_{1,1},\mu_{1,2},\mu_{2,1},\mu_{2,2},\ldots,\mu_{s-2,1},\mu_{s-2,2}) \in \Delta_{2s-3}$. 
Let $X \subset {\mathbb C}^{2^{s-1}}$ be the affine curve defined by the following $(2^{s-1}-1)$ elliptic/hyperelliptic equations
$$
\left\{ \begin{array}{c}
w_{\alpha}^{2}=K_{\alpha} z^{\alpha_{1}}(z-\eta_{0})^{\alpha_{1}}
(z-\eta_{1})^{\alpha_{2}}(z-\eta_{2})^{\alpha_{2}}(z-\eta_{3})^{\alpha_{s}}
\prod_{k=1}^{s-3}(z-\eta_{k,1})^{\alpha_{k+2}} (z-\eta_{k,2})^{\alpha_{k+2}},\\ 
\alpha=(\alpha_{1},\ldots,\alpha_{s}) \in V_{s}.\\
\end{array}
\right\}.
$$

Then $X$ defines a closed Riemann surface $R_{s}$ of genus $g=1+2^{s-2}(s-2)$ whose jacobian variety 
is isogenous to the product of the jacobian varieties of the following $\sum_{j=1}^{[s/2]}\binom{s}{2j}$ elliptic/hyperelliptic curves
$$C_{i_{1},\ldots,i_{k}}: \nu^{2}=(\upsilon-\rho_{i_{1},1})(\upsilon-\rho_{i_{1},2})\cdots(\upsilon-\rho_{i_{k},1})(\upsilon-\rho_{i_{k},2}),$$
where $2 \leq k \leq s$ is even, the tuples $(i_{1},\ldots,i_{k})$ satisfy
$1\leq  i_{1} < i_{2} < \cdots < i_{k} \leq s,$
and
$$\rho_{i_{j},1}=\left\{\begin{array}{cl}
\infty, & i_{j}=1\\
1, & i_{j}=2\\
\mu_{r-2,1}, & i_{j}=r \geq 3
\end{array}
\right. \quad 
\rho_{i_{j},2}=\left\{\begin{array}{cl}
0, & i_{j}=1\\
\lambda, & i_{j}=2\\
\mu_{r-2,2}, & i_{j}=r\geq 3
\end{array}
\right.
$$
where, if $\rho_{i_{j},1}=\infty$, then the factor $(u-\rho_{i_{j},1})$ is deleted from the above expression. In particular, $JR_{s}$ contains, in its isogenous factors above, the elliptic curves $E_{1}(P),\ldots, E_{s}(P)$. 
\end{theo}

\begin{rema}\label{fibrado} 
(1) $JR_{s}$ contains at least  $s(s-1)/2$ elliptic curves in its isogenous decomposition. 
(2) Given $s$ elliptic curves, $\widehat{E}^{*}_{1},\ldots,\widehat{E}^{*}_{s}$, it is possible to find a tuple $P \in \Delta_{2s-3}$ such that $\widehat{E}^{*}_{j}$ is isogenous to $E_{j}(P)$, for every $j=1,\ldots, s$. So, if all these elliptic curves $E_{j}(P)$ are isogenous to a given one $E$, then $JR_{s} \cong_{isog.} E^{s} \times A$, where $A$ is isogenous to the product of other elliptic curves and the jacobian varieties of hyperelliptic Riemann surfaces.
\end{rema}

%%%%%%%%%%%%%%
\subsection{Two applicactions}
In Theorem \ref{construccion}, for $s=3$, the surface $R_{3}$ has genus three and $JR_{3} \cong_{isog.} E_{1}(P) \times E_{2}(P) \times E_{3}(P)$. Next, we observe that we may chose the tuple $P \in \Delta_{3}$ such that the three elliptic curves $E_{1}(P), E_{2}(P)$ and $E_{3}(P)$ may be chosen to be any three given elliptic curves.

\begin{theo}\label{coro1}
Let $E_{1}$, $E_{2}$ and $E_{3}$ be three elliptic curves. Choose $(\lambda_{1},\lambda_{2},\lambda_{3}) \in \Delta_{3}$ so that $E_{j}$ is isogenous to $E_{\lambda_{j}}$, for $j=1,2,3$. If $\mu$ is a root of 
$$\lambda_{2}\lambda_{3} \mu^{2}-(\lambda_{1}\lambda_{2}+\lambda_{2}\lambda_{3}+\lambda_{1}\lambda_{3}-\lambda_{1}-\lambda_{3}+1) \mu +\lambda_{1}\lambda_{2}=0,$$
then
$$R_{3}:=\left\{ \begin{array}{lcl}
w_{1}^{2}&=&\mu(\lambda_{3} \mu -1)(\lambda_{3} \mu -\lambda_{1})(\lambda_{3} - 1)
z \left( z-\dfrac{1}{\lambda_{1}-\lambda_{3}\mu} \right) \left(z-\dfrac{1}{\mu (1-\lambda_{3})} \right)\\
w_{2}^{2}&=&-\lambda_{3}\mu^{2}(\lambda_{3}-1)z\left(z+\dfrac{1}{\lambda_{3} \mu}\right)  \left(z-\dfrac{1}{1-\lambda_{3}\mu} \right)\\
w_{3}^{2}&=&-\lambda_{3}\mu^{2}(\lambda_{3}\mu-1)(\lambda_{3}-1)z^{2}\left(z+\dfrac{1}{\lambda_{3} \mu}\right) \left(z-\dfrac{1}{\mu(1-\lambda_{3})}\right) \\
\end{array}
\right\},
$$
defines a closed Riemann surface of genus three with $JR_{3}\cong_{isog.}E_{1} \times E_{2} \times E_{3}$. 
\end{theo}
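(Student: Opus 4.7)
The plan is to apply Theorem~\ref{construccion} with $s=3$ and then tune the three free parameters $(\lambda,\mu_{1,1},\mu_{1,2})\in\Delta_{3}$ so that the three elliptic factors of $JS$ become isogenous to $E_{1}$, $E_{2}$ and $E_{3}$ respectively. For $s=3$ the resulting genus is $1+2^{s-2}(s-2)=3$, as required, and the formula in Theorem~\ref{construccion} yields exactly $\binom{3}{2}=3$ elliptic factors (with no other hyperelliptic pieces), which one checks are
\begin{align*}
F_{1}&: y^{2}=x(x-1)(x-\lambda),\\
F_{2}&: y^{2}=(x-1)(x-\lambda)(x-\mu_{1,1})(x-\mu_{1,2}),\\
F_{3}&: y^{2}=x(x-\mu_{1,1})(x-\mu_{1,2}).
\end{align*}
Since $\dim JS = 3 = \dim(F_{1}\times F_{2}\times F_{3})$, ensuring $F_{j}\sim E_{j}$ for $j=1,2,3$ automatically gives $JS\sim E_{1}\times E_{2}\times E_{3}$ with no additional factor.

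The first matching is immediate: setting $\lambda=\lambda_{1}$ makes $F_{1}=E_{\lambda_{1}}$, hence isogenous to $E_{1}$ by hypothesis. The second is a M\"obius rescaling: the coordinate change $X=x/\mu_{1,1}$ identifies $F_{3}$ with $E_{\mu_{1,2}/\mu_{1,1}}$, so imposing $\mu_{1,2}=\lambda_{3}\mu_{1,1}$ makes $F_{3}$ isomorphic to $E_{\lambda_{3}}$ and thus isogenous to $E_{3}$. Writing $\mu:=\mu_{1,1}$, this prescribes $\mu_{1,1}=\mu$ and $\mu_{1,2}=\lambda_{3}\mu$, and leaves a single free parameter~$\mu$.

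It then remains to choose $\mu$ so that $F_{2}$, now equal to $y^{2}=(x-1)(x-\lambda_{1})(x-\mu)(x-\lambda_{3}\mu)$, is isomorphic (equivalently, has the same $j$-invariant as) $E_{\lambda_{2}}$. Since the $j$-invariant of a quartic Weierstrass model is determined by the cross-ratio of its four branch points, and the cross-ratio of $\{1,\lambda_{1},\mu,\lambda_{3}\mu\}$ is a degree-two rational function in $\mu$, equating it (in one of the six orderings corresponding to the $\mathfrak{S}_{3}$-action on $\lambda$-invariants) to $\lambda_{2}$ and clearing denominators produces a quadratic in $\mu$. A direct expansion should deliver precisely
$$\lambda_{2}\lambda_{3}\mu^{2}-(\lambda_{1}\lambda_{2}+\lambda_{2}\lambda_{3}+\lambda_{1}\lambda_{3}-\lambda_{1}-\lambda_{3}+1)\mu+\lambda_{1}\lambda_{2}=0.$$
Excluding the finitely many $\mu$ for which $(\lambda_{1},\mu,\lambda_{3}\mu)\notin\Delta_{3}$, either root of this quadratic is a valid choice.

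Finally, substituting $\lambda=\lambda_{1}$, $\mu_{1,1}=\mu$, $\mu_{1,2}=\lambda_{3}\mu$ into the formulas for $\eta_{0},\eta_{1},\eta_{2},\eta_{3}$ and for the $K_{\alpha}$'s of Theorem~\ref{construccion}, writing out the three equations indexed by $V_{3}=\{(1,1,0),(1,0,1),(0,1,1)\}$, and simplifying should reproduce the three displayed equations $w_{1}^{2}=\cdots$, $w_{2}^{2}=\cdots$, $w_{3}^{2}=\cdots$. Because $V_{3}$ is a $2$-dimensional $\mathbb{F}_{2}$-subspace of $\mathbb{F}_{2}^{3}$, one of the three $w_{\alpha}$'s is determined up to a rational factor by the other two, and suitable $\mathbb{F}_{2}$-recombinations and rescalings are presumably what allow the author to rewrite the system in the compact form shown (in particular, producing the $z^{2}$ factor visible in the third equation). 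The main obstacle is exactly this bookkeeping: getting the cross-ratio computation to spit out the displayed symmetric polynomial in $\lambda_{1},\lambda_{2},\lambda_{3}$, and choosing the right normalisations of the $w_{\alpha}$'s so that the specialisation of Theorem~\ref{construccion} matches the form given in the statement.
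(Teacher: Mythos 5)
Your proposal is correct and follows essentially the same route as the paper: the paper's proof likewise applies Theorem~\ref{construccion} with $s=3$, specializing $\lambda=\lambda_{1}$, $\mu_{1,1}=\mu$, $\mu_{1,2}=\lambda_{3}\mu$ with $\mu$ a root of the displayed quadratic, so that the three elliptic factors $F_{1},F_{2},F_{3}$ (exactly the curves you list) become isogenous to $E_{\lambda_{1}}$, $E_{\lambda_{2}}$, $E_{\lambda_{3}}$. Your cross-ratio justification for the quadratic (sending $1\mapsto 0$, $\mu\mapsto 1$, $\lambda_{3}\mu\mapsto\infty$ and requiring $\lambda_{1}\mapsto\lambda_{2}$) indeed reproduces the stated polynomial, a verification the paper leaves implicit.
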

\begin{proof}
If $\lambda=\lambda_{1}$, $\mu_{1,1}=\mu$ and $\mu_{1,2}=\lambda_{3}\mu$, then $(\lambda,\mu_{1,1},\mu_{1,2}) \in \Delta_{3}$ and 
$$E_{\lambda_{1}}=F_{1}: y^{2}=x(x-1)(x-\lambda_{1}),\;
E_{\lambda_{2}} \cong F_{2}: y^{2}=(x-1)(x-\lambda_{1})(x-\mu)(x-\lambda_{3}\mu),$$
$$E_{\lambda_{3}} \cong F_{3}: y^{2}=x(x-\mu)(x-\lambda_{3}\mu).$$

Theorem \ref{construccion}, applied to the triple $(\lambda,\mu_{1,1},\mu_{1,2})$, asserts that $JR_{3}\cong_{isog.}F_{1} \times F_{2} \times F_{3}$, so isogenous to the product $E_{1} \times E_{1} \times E_{3}$.
In this case the Riemann surface $R_{3}$ is described by the curve 
$$\left\{ \begin{array}{lcl}
w_{1}^{2}&=&\mu(\lambda_{3} \mu -1)(\lambda_{3} \mu -\lambda_{1})(\lambda_{3} - 1)
z \left( z-\dfrac{1}{\lambda_{1}-\lambda_{3}\mu} \right) \left(z-\dfrac{1}{\mu (1-\lambda_{3})} \right)\\
w_{2}^{2}&=&-\lambda_{3}\mu^{2}(\lambda_{3}-1)z\left(z+\dfrac{1}{\lambda_{3} \mu}\right)  \left(z-\dfrac{1}{1-\lambda_{3}\mu} \right)\\
w_{3}^{2}&=&-\lambda_{3}\mu^{2}(\lambda_{3}\mu-1)(\lambda_{3}-1)z^{2}\left(z+\dfrac{1}{\lambda_{3} \mu}\right) \left(z-\dfrac{1}{\mu(1-\lambda_{3})}\right) \\
\end{array}
\right\},
$$
the group $H=\langle a_{1},a_{2}\rangle \cong {\mathbb Z}_{2}^{2}$, where 
$$a_{1}(z,w_{1},w_{2},w_{3})=(z,w_{1},-w_{2},-w_{3}), \; 
a_{2}(z,w_{1},w_{2},w_{3})=(z,-w_{1},w_{2},-w_{3}),$$
each of the three automorphisms $a_{1}$, $a_{2}$ and $a_{3}=a_{1}a_{2}$ acting with exactly four fixed points, 
and the corresponding regular branched cover with $H$ as deck group is 
$$\pi:R_{3} \to \widehat{\mathbb C}: (z,w_{1},w_{2},w_{3}) \mapsto (\lambda_{3} \mu z +1)/z.$$

The non-trivial proper subgroups of $H$ are
$H_{1,2}=\langle a_{1}a_{2}\rangle$, $H_{1,3}=\langle a_{1}a_{3}\rangle=\langle a_{2} \rangle$ and $H_{2,3}=\langle a_{2}a_{3}\rangle=\langle a_{1}\rangle$.
The quotients $R_{3}/\langle a_{1} \rangle$, $R_{3}/\langle a_{2} \rangle$ and $R_{3}/\langle a_{1}a_{2} \rangle$ are of genus one and they correspond, respectively, to the elliptic curves $E_{\lambda_{2}}$, $E_{\lambda_{1}}$ and $E_{\lambda_{3}}$.
\end{proof}

Another consequence of the construction provided by Theorem \ref{construccion} is the following upper bound for $e(r)$ (which we conjecture to be sharps).

\begin{theo}\label{coro2}
$$e(r) \leq \left\{ \begin{array}{ll}
1+2^{(r-2)/2} r, & r \geq 4 \quad \mbox{even}\\
1+2^{(r-3)/2}(r-1), & r \geq 5 \quad \mbox{odd}
\end{array}
\right.
$$
\end{theo}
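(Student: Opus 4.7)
The plan is to apply Theorem \ref{construccion} with
$$s=\begin{cases} r/2+2, & r \text{ even,}\\ (r+3)/2, & r \text{ odd.}\end{cases}$$
A direct computation gives $1+2^{s-2}(s-2)=\widehat{e}(r)$ in both cases and $2s-3\geq r$; the construction then produces a closed Riemann surface $S$ of genus $\widehat{e}(r)$ whose jacobian is isogenous to a product containing $\binom{s}{2}\geq r$ elliptic factors together with further elliptic/hyperelliptic jacobians.

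Given any $r$ elliptic curves $E_{1},\ldots,E_{r}$, the core step is to choose the $2s-3$ parameters $(\lambda,\mu_{i,j})\in\Delta_{2s-3}$ so that $r$ of these elliptic factors are isogenous to $E_{1},\ldots,E_{r}$. I would proceed iteratively, exploiting that for every elliptic curve $E$ the set $\{\mu\in\Delta_{1}:E_{\mu}\text{ is isogenous to }E\}$ is dense in $\Delta_{1}$. First pick $\lambda$ so that the ``cyclic-pair'' factor indexed by $\{1,2\}$ (the first curve in the list of Remark 2(b) of Theorem \ref{construccion}) is isogenous to $E_{1}$; then, tuning $\mu_{1,1},\mu_{1,2}$ so that the $\{2,3\}$-factor matches $E_{2}$; continuing through the $s$ cyclic pairs matches $s$ of the listed factors and leaves an $(s-3)$-dimensional family of admissible tuples inside $\Delta_{2s-3}$. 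Inside this family, adjusting the remaining $s-3$ degrees of freedom allows one to match the $r-s\leq s-3$ outstanding targets against non-cyclic-pair elliptic factors $C_{i_{1},i_{2}}$, yielding in all $r$ matched factors.

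The main obstacle is verifying the feasibility of this iterative procedure: at each stage one must check that the $j$-invariant of the factor being matched depends non-trivially on the current free coordinate, and that a value realizing the target isogeny class can be chosen while keeping all coordinates in $\Delta_{1}$ and pairwise distinct. Density of isogeny classes in $\Delta_{1}$, together with the countability of the excluded values, supplies admissible solutions at each step. Combining these choices gives $e(E_{1},\ldots,E_{r})\leq\text{genus}(S)=\widehat{e}(r)$, and taking the supremum over tuples of elliptic curves yields $e(r)\leq\widehat{e}(r)$.
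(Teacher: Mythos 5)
Your reduction is the same as the paper's: both arguments apply Theorem \ref{construccion} with $s=(r+3)/2$ for $r$ odd and $s=r/2+2$ for $r$ even (the paper reaches the even case by adjoining one auxiliary elliptic curve and invoking the odd case, which gives the same $s$), and your genus computation $1+2^{s-2}(s-2)=\widehat{e}(r)$ is correct. The gap is in the matching step, and it is a real one. Your two-phase scheme --- first match the $s$ cyclic-pair factors $C_{1,2},C_{2,3},\ldots,C_{s-1,s},C_{1,s}$, then move inside the residual $(s-3)$-dimensional family to match the remaining targets against other factors $C_{i_{1},i_{2}}$ --- invokes density of isogeny classes as if each new condition could be imposed on a fresh free coordinate, but that is not the situation. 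Already in phase one, the factors $C_{s-1,s}$ and $C_{1,s}$ constrain the \emph{same} pair $(\mu_{s-2,1},\mu_{s-2,2})$: you must produce a point in the intersection of two countable unions of curves in ${\mathbb C}^{2}$, and density of each union by itself does not give a nonempty intersection (two disjoint countable dense unions of parallel lines already show this). In phase two the situation is worse: every factor $C_{i_{1},i_{2}}$ still available depends only on parameters that phase one has already constrained, so you need the modulus of each chosen factor to be non-constant along the admissible family; you correctly name this as ``the main obstacle'' but never establish it, nor do you specify which pairs $(i_{1},i_{2})$ to use, although the argument stands or falls with that choice.

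The paper closes exactly this gap by abandoning the cyclic pairs in favor of a star pattern in which all conditions are solved explicitly and exactly: for each $j$, the two factors with branch sets $\{\infty,0,\mu_{j,1},\mu_{j,2}\}$ and $\{1,\lambda,\mu_{j,1},\mu_{j,2}\}$ are matched to two of the prescribed curves by imposing the linear relation $\mu_{j,2}=\lambda_{j+1}\mu_{j,1}$ --- which makes the first factor \emph{isomorphic}, not merely isogenous, to $E_{\lambda_{j+1}}$ --- and then letting $\mu_{j,1}$ be a root of an explicit quadratic, which makes the second factor isomorphic to another prescribed $E_{\lambda_{k}}$; together with $C_{1,2}\cong E_{\lambda_{1}}$ this accounts for all $2s-3$ targets, and density of isogenous parameter values is used only for the harmless final adjustment guaranteeing that the resulting tuple lies in $\Delta_{2s-3}$. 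If you want to keep your argument, the repair is to merge your two phases: when choosing $(\mu_{j,1},\mu_{j,2})$, first fix the ratio $\mu_{j,2}/\mu_{j,1}$ inside the dense set matching one target, and then observe that the relevant cross-ratio, restricted to that line, is a non-constant rational function of $\mu_{j,1}$, so its image is cofinite and meets the dense set matching the second target. That ordered two-step choice is what legitimizes the density argument, and it is precisely the paper's construction with ``isomorphic'' relaxed to ``isogenous''.
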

\begin{proof}
Assume $r \geq 3$ is odd and write $r=2s-3$, where $s \geq 3$. Let us fix $(\lambda_{1},\ldots,\lambda_{r}) \in \Delta_{r}$. Set 
$\lambda=\lambda_{1}$ and, for $j=1,\ldots,s-2$, we set
$\mu_{j,2}=\lambda_{j+1} \mu_{j,1}$ and $\mu_{j,1}$ a root of the polynomial
$$\lambda_{j+1}(1-\lambda_{s-2+j})\mu_{j,1}^{2}+(\lambda_{s-2+j}-\lambda_{j+1}-\lambda_{1}+\lambda_{1}\lambda_{j+1}\lambda_{s-2+j})\mu_{j,1}+(1-\lambda_{1}\lambda_{s-2+j})=0.$$

As a value $\lambda_{j}$ may be changed to some other value $\lambda_{j}'$ (inside an infinite set of values) so that $E_{\lambda_{j}}$ and $E_{\lambda_{j}'}$ are isogenous, we may assume  
$(\lambda,\mu_{1,1},\mu_{1,2},\mu_{2,1},\mu_{2,2},\ldots,\mu_{s-2,1},\mu_{s-2,2}) \in \Delta_{2s-3}$.
If $R_{s}$ is the Riemann surface constructed in Theorem \ref{construccion}, then $JR_{s}$ is isogenous to a product of certain explicit jacobian varieties. It can be seen, from the proof of that theorem, that some of these factors are (isogenous to) the elliptic curves 
$$\begin{array}{lll}
y^{2}&=&x(x-1)(x-\lambda)\\
y^{2}&=&x(x-\mu_{j,1})(x-\mu_{j,2}), \quad j=1,\ldots,s-2,\\
y^{2}&=&(x-1)(x-\lambda)(x-\mu_{j,2}), \quad j=1,\ldots,s-2.
\end{array}
$$

The choice we have made for the tuple $(\lambda,\mu_{1,1},\mu_{1,2},\mu_{2,1},\mu_{2,2},\ldots,\mu_{s-2,1},\mu_{s-2,2}) \in \Delta_{2s-3}$ ensures that they are all isomorphic to the elliptic curves $y^{2}=x(x-1)(x-\rho)$,
where $\rho \in \{\lambda_{1},\ldots,\lambda_{r}\}$.
The case $r \geq 4$ even can be worked similarly, but in this case we add an extra elliptic curve to the $r$ given ones in order to obtain the result as a consequence of the odd situation.
\end{proof}

%%%%%%%%%%
\subsection{The second construction}
The Riemann surface obtained in the construction given in Theorem \ref{construccion} is one of two (isomorphic) factors of a fiber product. The next  theorem describes a similar construction, but in this case, this surface is an irreducible fiber product.

\begin{theo}\label{construccion2}
If $(\lambda_{1},\ldots,\lambda_{r}) \in \Delta_{r}$, then the affine algebraic curve
$$Y=
\left\{\begin{array}{c}
w_{1}^{2}=(\lambda_{r}-\lambda_{1})u+1, \; 
w_{2}^{2}=(\lambda_{r}-\lambda_{2})u+1, \cdots, 
w_{r-1}^{2}=(\lambda_{r}-\lambda_{r-1})u+1,\\
w_{r}^{2}=-u(\lambda_{r}u+1)((\lambda_{r}-1)u+1)
\end{array}
\right\}
$$
defines a closed Riemann surface $R$ of genus $g=1+2^{r-2}(r-1)$ such that $JR \cong_{isog.} E_{\lambda_{1}} \times \cdots \times E_{\lambda_{r}} \times A_{g-r}$, where $A_{g-r}$ is the product of certain explicit elliptic/hyperelliptic Riemann surfaces.
\end{theo}

%%%%%%%%%%%%%%%%%
%%%%%%%%%%%%%%%%%
\section{Proof of Theorems \ref{construccion} and \ref{construccion2}}
%%%%%%%%%%%%
%%%%%%%%%%%%%%%%%%
%%%%%%%%%%%%%%%%%%
\subsection{Proof of Theorem \ref{construccion}}
Set $E_{j}=E_{j}(P)$, where $j=1,\ldots s$.
Let us consider the (affine) generalized Humbert curve of type $2s-1$
$$
D: \left\{ \begin{array}{cccccc}
z_{1}^{2}+z_{2}^{2}+z_{3}^{2}&=&0,& \lambda z_{1}^{2}+z_{2}^{2}+z_{4}^{2}&=&0,\\
\mu_{1,1}z_{1}^{2}+z_{2}^{2}+z_{5}^{2}&=&0, & \mu_{1,2}z_{1}^{2}+z_{2}^{2}+z_{6}^{2}&=&0,\\
\vdots & \vdots& \vdots & \vdots & \vdots& \vdots\\
\mu_{k,1}z_{1}^{2}+z_{2}^{2}+z_{2k+3}^{2}&=&0,& \mu_{k,2}z_{1}^{2}+z_{2}^{2}+z_{2k+4}^{2}&=&0,\\
\vdots & \vdots& \vdots & \vdots & \vdots& \vdots\\
\mu_{s-2,1}z_{1}^{2}+z_{2}^{2}+z_{2s-1}^{2}&=&0, & \mu_{s-2,2}z_{1}^{2}+z_{2}^{2}+1&=&0.
\end{array}
\right\}.
$$

As observed in Section \ref{Sec:Humbert}, $D$ defines a closed Riemann surface of genus $g_{D}=1+2^{2s-2}(s-2)$, with generalized Humbert group
$\langle b_{1},\ldots,b_{2s-1}\rangle=H_{0} \cong {\mathbb Z}_{2}^{2s-1}$ of type $2s-1$.

Let us consider the surjective homomorphism
$$\begin{array}{c}
\theta:H_{0} \to H=\langle a_{1},\ldots,a_{s-1}\rangle \cong {\mathbb Z}_{2}^{s-1}\\
b_{2k-1}, b_{2k} \mapsto a_{k}, \; k=1,\ldots, s-1,\\
b_{2s-1},b_{s} \mapsto a_{s}=a_{1}a_{2}\cdots a_{s-1}.
\end{array}
$$

The kernel of $\theta$ is given by 
$$K=\langle b_{1}b_{2}, b_{3}b_{4},\ldots,b_{2k-1}b_{2k}, \ldots, b_{2s-3}b_{2s-2},b_{1}b_{3}b_{5}\cdots b_{2s-1}\rangle \cong {\mathbb Z}_{2}^{s}$$
and, as the only non-trivial elements acting with fixed points are $b_{1},\ldots, b_{2s-1}$ and $b_{2s}=b_{1}b_{2}\cdots b_{2s-1}$, it follows that 
$K$ acts freely on $D$.  It follows that $R=D/K$ is a closed Riemann surface of genus (by the Riemann-Hurwitz formula) $g_{R}=1+2^{s-2}(s-2)$. 
In order to write equations for $R$, we need to compute a set of generators of ${\mathbb C}[z_{1},\ldots,z_{2s-1}]^{K}$, the algebra of $K$-invariant polynomials. Since the linear action of $K$ is given by diagonal matrices, a set of generators can be found to be 
$t_{1}=z_{1}^{2}, t_{2}=z_{2}^{2},\ldots, t_{2s-1}=z_{2s-1}^{2},$
together with the monomials of the form
$t_{\alpha}=(z_{1}z_{2})^{\alpha_{1}}(z_{3}z_{4})^{\alpha_{2}}\cdots(z_{2s-3}z_{2s-2})^{\alpha_{s-1}} z_{2s-1}^{\alpha_{s}},$
where $\alpha=(\alpha_{1},\ldots,\alpha_{s}) \in V_{s}$. As $V_{s}$ has cardinality $2^{s-1}-1$, the number of the above set of generators is $N=2^{s-1}+2s-2$.
Using the map $\Phi:D \to {\mathbb C}^{N}$, whose coordinates are $t_{1},\ldots,t_{2s-1}$ and the monomials $t_{\alpha}$, $\alpha \in V_{s}$, one obtains that the Riemann surface induced by $\Phi(D)$ is isomorphic to $R$ and that its equations are given by

$$
\Phi(D)=\left\{ \begin{array}{rrl}
t_{1}+t_{2}+t_{3}=0,& \lambda t_{1}+t_{2}+t_{4}=0,\\
\mu_{1,1}t_{1}+t_{2}+t_{5}=0,& \mu_{1,2}t_{1}+t_{2}+t_{6}=0,\\
\vdots &  \vdots  \\
\mu_{k,1}t_{1}+t_{2}+t_{2k+3}=0,& \mu_{k,2}t_{1}+t_{2}+t_{2k+4}=0,\\
\vdots &  \vdots \\
\mu_{s-2,1}t_{1}+t_{2}+t_{2s-1}=0,& \mu_{s-2,2}t_{1}+t_{2}+1=0,\\
t_{\alpha}^{2}=(t_{3}t_{4})^{\alpha_{2}}\cdots(t_{2s-3}t_{2s-2})^{\alpha_{s-1}} t_{2s-1}^{\alpha_{s}}; &\alpha=(\alpha_{1},\ldots,\alpha_{s}) \in V_{s} 
\end{array}
\right\}.
$$

The first linear equations permit us to write $t_{2},\ldots,t_{2s-1}$ in terms of $t_{1}$ as follows:
$$t_{2}=-1-\mu_{s-2,2}t_{1}, \; t_{3}=1+(\mu_{s-2,2}-1)t_{1}, \; t_{4}=1+(\mu_{s-2,2}-\lambda)t_{1}$$
$$t_{2k+3}=1+(\mu_{s-2,2}-\mu_{k,1})t_{1}, \quad t_{2k+4}=1+(\mu_{s-2,2}-\mu_{s,2})t_{1}, \quad k=1,\ldots, s-3,$$
$$t_{2s-1}=1+(\mu_{s-2,2}-\mu_{s-2,1})t_{1}.$$

We may then eliminate the variables $t_{2},\ldots,t_{2s-1}$ and just keep the variables $t_{1}$ and $t_{\alpha_{1},\ldots,\alpha_{s}}$. 
Let us set $t_{1}=z$ and $t_{\alpha}=w_{\alpha}$, for $\alpha \in V_{s}$.
In these new $2^{s-1}$ coordinates, the above curve is isomorphic to the one given by
$$
C=\left\{ \begin{array}{c}
w_{\alpha}^{2}=z^{\alpha_{1}}(-1-\mu_{s-2,2}z)^{\alpha_{1}}
(1+(\mu_{s-2,2}-1)z)^{\alpha_{2}}(1+(\mu_{s-2,2}-\lambda)z )^{\alpha_{2}}\cdots\\
\cdots  (1+(\mu_{s-2,2}-\mu_{k,1})z)^{\alpha_{k+2}}(1+(\mu_{s-2,2}-\mu_{k,2})z)^{\alpha_{k+2}} \cdots \\
\cdots (1+(\mu_{s-2,2}-\mu_{s-3,1})z)^{\alpha_{s-1}}(1+(\mu_{s-2,2}-\mu_{s-3,2})z)^{\alpha_{s-1}} (1+(\mu_{s-2,2}-\mu_{s-2,1})z)^{\alpha_{s}},\\ 
\alpha=(\alpha_{1},\ldots,\alpha_{s}) \in V_{s}.\\
\end{array}
\right\}
$$

By making the choices as described in the hypothesis of the theorem for $K_{\alpha}$ and the values of $\eta_{0}$, $\eta_{1}$, $\eta_{2}$, $\eta_{3}$ and $\eta_{k,j}$, then the above curve can be written in the desired algebraic form.
If $\Phi_{1}:D \to {\mathbb C}^{2^{s-1}}$ is the map whose coordinates are $z$ and $w_{\alpha}$, where $\alpha \in V_{s}$, then $\Phi_{1}(D)=C$. 
If $\alpha=(\alpha_{1},\ldots,\alpha_{s})$ and $j=1,\ldots,s-1$, then 
the induced automorphisms $a_{j}$ acts by multiplication by $-1$ at coordinates $w_{\alpha}$ if $\alpha_{j}=1$ and acts by the identity on the rest of coordinates. The map
$\pi:C \to \widehat{\mathbb C}: (z,\{w_{\alpha \in V_{s}}\}) \mapsto (1+\mu_{s-2,2}z)/z$
is a regular branched cover with $H$ as its deck group and its satisfies that $P=\pi \circ \Phi_{1}$. The branch locus of $\pi$ is the set
$\{\infty,0,1,\lambda, \mu_{1,1}, \mu_{1,2},\ldots,\mu_{s-2,1}, \mu_{s-2,2}\}.$
All the above permits to observe the following.

\begin{lemm}\label{lema01}
The only non-trivial elements of $H$ acting with fixed points are $a_{1},\ldots,a_{s-1}$ and $a_{s}=a_{1}a_{2} \cdots a_{s-1}$. Moreover, $\pi({\rm Fix}(a_{1}))=\{\infty,0\}$,  $\pi({\rm Fix}(a_{2}))=\{1,\lambda\}$, $\pi({\rm Fix}(a_{k}))=\{\mu_{k-2,1}, \mu_{k-2,2}\}$, for $k=3,\ldots, s-1$, and 
$\pi({\rm Fix}(a_{s}))=\{\mu_{s-2,1}, \mu_{s-2,2}\}.$
It can be seen that, for $j=1,\ldots,s$, ${\rm Fix}(a_{j})$ has cardinality $2^{s-1}$.
\end{lemm}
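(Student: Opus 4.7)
The plan is to transfer the question about $H = H_{0}/K$ acting on $S = D/K$ to one about $H_{0}$ acting on $D$. The key principle is that for $\bar h \in H$, the element $\bar h$ has a fixed point on $S$ if and only if some element of the coset $hK \subset H_{0}$ has a fixed point on $D$. It has already been recorded that the non-trivial elements of $H_{0}$ with fixed points on $D$ are exactly $b_{1},\ldots,b_{2s-1}$ together with $b_{2s} = b_{1}b_{2}\cdots b_{2s-1}$, so the set of non-trivial elements of $H$ with fixed points on $S$ equals $\theta(\{b_{1},\ldots,b_{2s}\}) = \{a_{1},\ldots,a_{s-1},a_{s}\}$; since $s \geq 3$, these are $s$ distinct non-trivial elements, which is the first claim of the lemma.

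To identify $\pi({\rm Fix}(a_{j}))$, I use the identity $P = \pi \circ \Phi_{1}$. A point $\bar p \in S$ is fixed by $a_{j}$ iff some lift $p \in D$ is fixed by an element of $a_{j} K$, and the only such elements with fixed points are $b_{2j-1},b_{2j}$ when $j \leq s-1$, respectively $b_{2s-1},b_{2s}$ when $j = s$. Therefore $\pi({\rm Fix}(a_{j}))$ equals the union of the already-recorded branch values $P({\rm Fix}(b_{2j-1}))$ and $P({\rm Fix}(b_{2j}))$, which yields the stated values.

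For the cardinality count, $P \colon D \to \widehat{\mathbb{C}}$ is a regular branched cover of degree $2^{2s-1}$ with local stabilizer of order $2$ over each of the $2s$ branch points, so $|{\rm Fix}(b_{i})| = 2^{2s-2}$ for every $i$. The group $K$ acts freely on $D$ (as used earlier in computing $g_{S}$) and preserves each ${\rm Fix}(b_{i})$ (since $H_{0}$ is abelian), so the image of ${\rm Fix}(b_{i})$ in $S$ has cardinality $2^{2s-2}/|K| = 2^{s-2}$. Provided the $K$-saturations of ${\rm Fix}(b_{2j-1})$ and ${\rm Fix}(b_{2j})$ project to disjoint subsets of $S$, this gives $|{\rm Fix}(a_{j})| = 2 \cdot 2^{s-2} = 2^{s-1}$.

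The main obstacle I anticipate is verifying this last disjointness. If $k \in K$ were to satisfy $kp = q$ for some $p \in {\rm Fix}(b_{2j-1})$ and $q \in {\rm Fix}(b_{2j})$, then using commutativity of $H_{0}$ I would compute $b_{2j}\, p = b_{2j} k^{-1} q = k^{-1} b_{2j} q = k^{-1} q = p$, so that $b_{2j-1} b_{2j}$ fixes $p$. For $j \leq s-1$ this element $b_{2j-1}b_{2j}$ is one of the explicit generators of $K$; for $j = s$ it equals $b_{1}b_{2}\cdots b_{2s-2}$, which also lies in $K$ (applying $\theta$ sends it to $a_{1}^{2}a_{2}^{2}\cdots a_{s-1}^{2} = e$, and it is visibly non-trivial in $H_{0}$). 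In either case, freeness of the $K$-action on $D$ is contradicted, so the disjointness holds and the cardinality $2^{s-1}$ follows.
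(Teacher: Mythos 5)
Your proof is correct and is exactly the argument the paper leaves implicit: its proof of this lemma is the one-line remark that it ``follows directly from all the above,'' meaning precisely the transfer of the fixed-point data from $D$ to $S=D/K$ via $\theta$, $K$ acting freely, and $P=\pi\circ\Phi_{1}$, which you have spelled out. Your explicit disjointness check (including the case $j=s$, where $b_{2s-1}b_{2s}=b_{1}\cdots b_{2s-2}\in K$) is a welcome detail that the paper's ``it can be seen'' glosses over.
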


If $2 \leq k \leq s$ is even
and $1 \leq i_{1}<i_{2}< \cdots < i_{k} \leq s$, then we consider the subgroup
$$H_{i_{1},i_{2},\ldots,i_{k}}=\langle a_{i_{1}}a_{i_{2}}, a_{i_{1}}a_{i_{3}},\ldots,a_{i_{1}}a_{i_{k}},a_{j}; j\in\{1,\ldots,s\}-\{i_{1},\ldots,i_{k}\}\rangle \cong{\mathbb Z}_{2}^{s-2}. $$

If $k=2$, then we have $s(s-1)/2$ such subgroups. Between them are 
$H_{1,2}=\langle a_{1}a_{2},a_{3},\ldots,a_{s}\rangle$, 
$H_{2,3}=\langle a_{2}a_{3},a_{4},\ldots,a_{1}\rangle$,  
$H_{3,4}=\langle a_{3}a_{4},a_{5},\ldots,a_{2}\rangle, \ldots$,  
$H_{s-1,s}=\langle a_{s-1}a_{s},a_{1},\ldots,a_{s-2}\rangle$ and 
$H_{s,1}=\langle a_{s}a_{1},a_{2},\ldots,a_{s-1}\rangle$.
The quotient orbifold $C/H_{j,j+1}$ has underlying Riemann surface structure $E_{j} \cong E_{\lambda_{j}}$, for $j=1,\ldots,s-1$, and $C/H_{s,1}$ has underlying Riemann surface structure $E_{s} \cong E_{\lambda_{s}}$.

\begin{lemm} \label{lema02}
With the above notations, the following hold for the above defined subgroups.
\begin{enumerate}
\item Any two such subgroups $H_{i_{1},i_{2},\ldots,i_{k}}$ and $H_{j_{1},j_{2},\ldots,j_{l}}$ commute.
\item The quotient $C/H_{i_{1},i_{2},\ldots,i_{k}}$ has genus $k-1$ and its underlying Riemann surface is given by the (elliptic) hyperelliptic curve 
$$\nu^{2}=(\upsilon-\rho_{i_{1},1})(\upsilon-\rho_{i_{1},2})\cdots(\upsilon-\rho_{i_{k},1})(\upsilon-\rho_{i_{k},2}),$$
where
$$\rho_{i_{j},1}=\left\{\begin{array}{cl}
\infty, & i_{j}=1\\
1, & i_{j}=2\\
\mu_{r-2,1}, & i_{j}=r \geq 3
\end{array}
\right. \quad 
\rho_{i_{j},2}=\left\{\begin{array}{cl}
0, & i_{j}=1\\
\lambda, & i_{j}=2\\
\mu_{r-2,2}, & i_{j}=r\geq 3
\end{array}
\right.
$$
In the case that $\rho_{i_{j},1}=\infty$, then the factor $(u-\rho_{i_{j},1})$ is deleted from the above expression.

\item The group generated by any two different such subgroups is $H$.

\end{enumerate}

\end{lemm}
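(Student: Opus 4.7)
The plan splits cleanly according to the three claims.

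For (1), the group $H \cong \mathbb{Z}_{2}^{s-1}$ is abelian, so any two of its subgroups automatically commute element-wise; nothing more is needed.

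For (2), the strategy is to exhibit each $H_{I}$ (writing $I=\{i_{1},\ldots,i_{k}\}$) as the kernel of a single $\mathbb{F}_{2}$-linear functional on $H$, and hence as an index-$2$ subgroup. Concretely, I would parametrize elements of $H$ as $a_{1}^{e_{1}} \cdots a_{s}^{e_{s}}$ with $(e_{1},\ldots,e_{s}) \in \mathbb{F}_{2}^{s}$ subject to the single relation $e_{1}+\cdots+e_{s}\equiv 0$ (reflecting $a_{s}=a_{1}\cdots a_{s-1}$), and define $\ell_{I}(a_{1}^{e_{1}}\cdots a_{s}^{e_{s}})=\sum_{j\in I} e_{j} \bmod 2$. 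The evenness of $k=|I|$ is precisely what makes $\ell_{I}$ well defined on the quotient. A direct comparison with the given generators shows $\ker\ell_{I}=H_{I}$, and in particular $a_{j}\in H_{I}$ iff $j\notin I$ (with the case $j=s$ handled through the relation).

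Since $[H:H_{I}]=2$, the map $\pi:C\to\widehat{\mathbb C}$ descends to a double cover $C/H_{I}\to\widehat{\mathbb C}$. By Lemma \ref{lema01} the branch locus of $\pi$ consists of the $2s$ points $\pi({\rm Fix}(a_{j}))$, and a point $\pi({\rm Fix}(a_{j}))$ remains a branch value of the double cover exactly when $a_{j}\notin H_{I}$, i.e.\ when $j\in I$. This yields the $2k$ branch values $\{\rho_{i_{j},1},\rho_{i_{j},2}:1\le j\le k\}$. Riemann--Hurwitz then gives genus $k-1$, and the double cover of $\widehat{\mathbb C}$ with this branch locus is the stated (elliptic) hyperelliptic curve, with the factor at $\infty$ deleted in the obvious way when some $\rho_{i_{j},1}=\infty$.

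For (3), each $H_{I}$ has index $2$ in $H$, so if $H_{I}\neq H_{J}$ then $H_{I}H_{J}$ strictly contains $H_{I}$, forcing $H_{I}H_{J}=H$ since $[H:H_{I}]=2$. The only nontrivial bookkeeping lies in part (2): carefully tracking which of the $a_{j}$ lie in $H_{I}$ (the case $j=s$ via the single relation $a_{1}\cdots a_{s}=1$). Once that identification is in hand, the genus count and the explicit hyperelliptic equation are routine consequences of Riemann--Hurwitz and the standard form of double covers of $\widehat{\mathbb C}$.
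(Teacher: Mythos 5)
Your proposal is correct and takes essentially the same route as the paper's (very terse) proof: abelianness for part (1), Lemma \ref{lema01} together with Riemann--Hurwitz and the standard form of double covers of $\widehat{\mathbb C}$ for part (2), and a generation argument for part (3). The $\mathbb{F}_{2}$-linear functional $\ell_{I}$ and the resulting index-two identification of $H_{I}$ are precisely the bookkeeping that the paper leaves implicit (it only asserts $H_{i_{1},\ldots,i_{k}}\cong{\mathbb Z}_{2}^{s-2}$ and that which $a_{j}$ lie in the subgroup determines the surviving branch points), so your write-up is a faithful filling-in of the same argument rather than a different one.
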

\begin{proof}
Property (1) holds trivially as $H$ is an abelian group. Property (2) follows from Riemann-Hurwitz formula and Lemma \ref{lema01}. Property (3) is clear as in the product we obtain all the generators.
\end{proof}

The next result states that the sum of the genera appearing in all quotients of the form $C/H_{i_{1},i_{2},\ldots,i_{k}}$ is equal to the genus of $C$ (that is, the genus of $R$). Recall that we are considering $k$ even and $2 \leq k \leq s$.

\begin{lemm}\label{lema03}
$\sum_{k=2}^{s} \binom{r}{k} (k-1) \left(\frac{1+(-1)^{k}}{2}\right)
=1+2^{s-2}(s-2).$
\end{lemm}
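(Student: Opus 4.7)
The plan is to observe that the factor $(1+(-1)^k)/2$ is simply the indicator of $k$ being even, so that the sum reduces to
\[
\Sigma := \sum_{\substack{2 \leq k \leq s \\ k \text{ even}}} \binom{s}{k}(k-1)
= \sum_{\substack{2 \leq k \leq s \\ k \text{ even}}} k\binom{s}{k}
- \sum_{\substack{2 \leq k \leq s \\ k \text{ even}}} \binom{s}{k},
\]
and then to evaluate each of the two pieces using standard binomial identities.

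First I would recall the two parity splittings
\[
\sum_{k=0}^{s} \binom{s}{k} = 2^{s}, \qquad \sum_{k=0}^{s} (-1)^{k}\binom{s}{k} = 0,
\]
which by addition give $\sum_{k \text{ even}} \binom{s}{k} = 2^{s-1}$. Subtracting the $k=0$ contribution yields
\[
\sum_{\substack{2 \leq k \leq s \\ k \text{ even}}} \binom{s}{k} = 2^{s-1}-1.
\]
Next, I would use the identity $k\binom{s}{k} = s\binom{s-1}{k-1}$ together with the analogous splitting applied to $(1+x)^{s-1}$ evaluated at $x=\pm 1$ (valid for $s\geq 2$), obtaining
\[
\sum_{k=0}^{s} k\binom{s}{k} = s\, 2^{s-1}, \qquad \sum_{k=0}^{s} (-1)^{k} k\binom{s}{k} = 0,
\]
and therefore
\[
\sum_{\substack{2 \leq k \leq s \\ k \text{ even}}} k\binom{s}{k} = s\, 2^{s-2},
\]
where the $k=0$ term already vanishes.

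Combining the two evaluations,
\[
\Sigma = s\, 2^{s-2} - (2^{s-1}-1) = 2^{s-2}(s-2) + 1,
\]
which is precisely the right-hand side of the lemma. The calculation is essentially mechanical once the parity indicator is recognized; the only mild point to keep track of is that the boundary terms $k=0$ (which contributes $0$ to the first sum and $1$ to the second) must be removed, since the lemma's sum starts at $k=2$, so there is no genuine obstacle to overcome.
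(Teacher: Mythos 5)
Your proof is correct, and it reaches the identity by a slightly different mechanism than the paper. The paper also filters the even terms by evaluating at $x=\pm 1$, but it handles the weight $(k-1)$ by differentiating the generating function $f(x)=\frac{(1+x)^{s}}{2x}=\frac{1}{2}\sum_{k=0}^{s}\binom{s}{k}x^{k-1}$, so that $f'(1)+f'(-1)$ produces the sum in one stroke (the $k=0$ term supplying the $-1$, and $f'(-1)=0$ since $s\geq 3$). You instead split $(k-1)=k-1$ and evaluate the two resulting sums separately, using the absorption identity $k\binom{s}{k}=s\binom{s-1}{k-1}$ together with the parity sums $\sum_{k}\binom{s}{k}(\pm 1)^{k}$; this avoids calculus entirely and keeps every step at the level of standard binomial identities, at the cost of tracking the boundary terms $k=0,1$ by hand, which you do correctly (including the caveat $s\geq 2$ for the vanishing of $\sum_{k}(-1)^{k}k\binom{s}{k}$, harmless here since the lemma is applied for $s\geq 3$). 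One incidental point: the lemma as printed has a typo, $\binom{r}{k}$ should read $\binom{s}{k}$, and you silently (and correctly) worked with $\binom{s}{k}$, exactly as the paper's own proof does.
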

\begin{proof}
If $f(x)=\frac{(1+x)^{s}}{2x}=\frac{1}{2}\sum_{k=0}^{s} \binom{s}{k} x^{k-1}$, then 
$f'(x)=\frac{(1+x)^{s-1}((s-1)x-1)}{2x^{2}}=\frac{1}{2}\sum_{k=0}^{s} \binom{s}{k} (k-1) x^{k-2}$,
and 
$2^{s-2}(s-2)=f'(1)+f'(-1)=\frac{1}{2}\sum_{k=0}^{s} \binom{s}{k} (k-1) (1+(-1)^{k}) =-1+\sum_{k=2}^{s} \binom{s}{k} \frac{(1+(-1)^{k})}{2} (k-1)$.
\end{proof}

We may apply Theorem \ref{coroKR} for $C$ using all the subgroups $H_{i_{1},\ldots,i_{k}}$ in order to obtain that $JC$ (so $JR$) is isogenous to the product of the jacobian varieties of all Riemann surfaces $C/H_{i_{1},\ldots,i_{k}}$ as desired. The equations of these curves are provided in Lemma \ref{lema02}.

%%%%%%%%%%%%%%%%%%
%%%%%%%%%%%%%%%%%%
\begin{rema}[A fiber product description]
Let us consider the $s$ elliptic curves $E_{j}=E_{j}(P)$, where $j=1,\ldots s$.
If we consider the degree two maps $\pi_{j}:E_{j} \to \widehat{\mathbb C}$ defined as $\pi_{j}(x,y)=x$, then we may perform the fiber product of the $s$ pairs $(E_{1},\pi_{1}),\ldots,(E_{s},\pi_{s})$. Such a fiber product is given by an affine curve 
$Z \subset {\mathbb C}^{s+1}$, formed of the tuples $(x,y_{1},\ldots,y_{s})$ so that $(x,y_{j}) \in E_{j}$, for $j=1,\ldots,s$. The curve $Z$ is reducible and contains two irreducible components, both of them being isomorphic, and it admits the group of automorphisms
$N=\langle f_{1},\ldots,f_{s}\rangle \cong {\mathbb Z}_{2}^{s}$, where
$f_{j}(x,y_{1},\ldots,y_{s})=(x,y_{1},\ldots,y_{j-1},-y_{j},y_{j+1},\ldots,y_{s})$. The two irreducible factors are permuted by some elements of $N$ and each one is invariant under a subgroup isomorphic to ${\mathbb Z}_{2}^{s-1}$. It can be shown that the Riemann surface defined by any of these two irreducible components is isomorphic to $X$ as defined in Theorem \ref{construccion}.
\end{rema}

%%%%%%%%%%%%%%%%%%
%%%%%%%%%%%%%%%%%%
%\section{An irreducible fiber product of elliptic curves}\label{Sec:otraconstruccion}

\subsection{Proof of Theorem \ref{construccion2}}

%\subsubsection{The fiber product construction}
Let $(\lambda_{1},\ldots,\lambda_{r}) \in \Delta_{r}$, $E_{\lambda_{j}}: y_{j}^{2}=x_{j}(x_{j}-1)(x_{j}-\lambda_{j})$ and $\pi_{j}:E_{\lambda_{j}} \to \widehat{\mathbb C}$ defined by $\pi_{j}(x_{j},y_{j})=x_{j}$. The locus of branch values of $\pi_{j}$ is the set $\{\infty,0,1,\lambda_{j}\}$.
Let us consider the generalized Humbert curve $F=C(\lambda_{1},\ldots,\lambda_{r}) \subset {\mathbb P}^{r+2}$ of type $r+2$, which is of genus 
$g=1+2^{r}(r-1)$, with corresponding generalized Humbert group $\langle b_{1},\ldots,b_{r+2}\rangle=H_{0} \cong {\mathbb Z}_{2}^{r+2}$ of type $r+2$.
As the only non-trivial elements of $H_{0}$ acting with fixed points are $b_{1},\ldots, b_{r+2}$ and $b_{r+3}=b_{1}b_{2}\cdots b_{r+2}$,  
the subgroup $K^{*}=\langle b_{1}b_{2}, b_{2}b_{3}\rangle \cong {\mathbb Z}_{2}^{2}$ acts freely on $F$. 
Then $Y=F/K^{*}$ is a closed Riemann surface of genus $g_{Y}=1+2^{r-2}(r-1)$. The quotient group $L=H_{0}/\langle b_{1}b_{2}, b_{2}b_{3}\rangle$ induces the group of automorphisms of $Y$. It can be seen that $L=\langle c_{1},\ldots,c_{r}\rangle \cong {\mathbb Z}_{2}^{r}$, where 
$$c_{j}(x,y_{1},\ldots,y_{r})=(x,y_{1},\ldots,y_{j-1}, - y_{j},y_{j+1},\ldots,y_{r}), \; j=1,...,r.$$ 

The map $\pi:Y \to \widehat{\mathbb C}: (x,y_{1},\ldots,y_{r}) \mapsto x$, 
is a regular branched cover, with $H$ as its deck group, and its branch locus is the set
$\{\infty,0,1,\lambda_{1},\ldots,\lambda_{r}\}.$

\begin{rema}
With the above description, we obtain a set of equations for $Y$ as 
$$
\left\{\begin{array}{c}
w_{1}^{2}=(\lambda_{r}-\lambda_{1})u+1, \; 
w_{2}^{2}=(\lambda_{r}-\lambda_{2})u+1, \cdots, 
w_{r-1}^{2}=(\lambda_{r}-\lambda_{r-1})u+1,\\
w_{r}^{2}=-u(\lambda_{r}u+1)((\lambda_{r}-1)u+1)
\end{array}
\right\}.
$$

The equations for $E_{\lambda_{j}}$ can also be written as (for $j=1,\ldots,r-1$)
$$
\left\{\begin{array}{c}
w_{1}^{2}=(\lambda_{r}-\lambda_{1})u+1, \cdots, 
w_{j-1}^{2}=(\lambda_{r}-\lambda_{j-1})u+1\\
w_{j}^{2}=(\lambda_{r}-\lambda_{j})u+1, \cdots, 
w_{r-1}^{2}=(\lambda_{r}-\lambda_{r-1})u+1\\
w_{r}^{2}=-u(\lambda_{r}u+1)((\lambda_{r}-1)u+1)((\lambda_{r}-\lambda_{j})u+1)
\end{array}
\right\}
$$
and for $E_{\lambda_{r}}$ as 
$$
\left\{\begin{array}{c}
w_{1}^{2}=(\lambda_{r}-\lambda_{1})u+1, \cdots, 
w_{r-1}^{2}=(\lambda_{r}-\lambda_{r-1})u+1\\
w_{r}^{2}=-u(\lambda_{r}u+1)((\lambda_{r}-1)u+1)
\end{array}
\right\}.
$$
\end{rema}

\begin{lemm}\label{lema1}
The only non-trivial elements of $L$ acting with fixed points are $c_{1},\ldots,c_{r}$ and $c_{r+1}=c_{1}c_{2} \cdots c_{r}$. Moreover, $\pi({\rm Fix}(c_{j}))=\lambda_{j}$, for 
$j=1,\ldots,r$, and $\pi({\rm Fix}(c_{r+1}))=\{\infty,0,1\}.$

\end{lemm}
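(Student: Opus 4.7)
The plan is to describe the map $\pi\colon X\to\widehat{\mathbb C}$ as a regular branched cover of degree $2^{r}$ with deck group $L\cong\mathbb Z_{2}^{r}$, and then to compute its inertia subgroups by reading off the local monodromy directly from the defining equations. Because $L$ is abelian, all points in $\pi^{-1}(v)$ share the same stabilizer $L_{v}\le L$, and an element $c\in L$ has a fixed point on $X$ if and only if $c\in L_{v}$ for at least one branch value $v\in\{\infty,0,1,\lambda_{1},\ldots,\lambda_{r}\}$.

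I would first handle the branch points $\lambda_{j}$. A small loop around $x=\lambda_{j}$ rotates the factor $(x-\lambda_{j})$ once, so $y_{j}^{2}$ picks up $e^{2\pi i}$ and hence $y_{j}$ is multiplied by $-1$, while every $y_{i}$ with $i\ne j$ is unaffected. Thus the local monodromy at $\lambda_{j}$ is $c_{j}$, the ramification index equals $2$, and $L_{\lambda_{j}}=\langle c_{j}\rangle$; the fiber $\pi^{-1}(\lambda_{j})$ therefore has $2^{r-1}$ points, each fixed by $c_{j}$ and by no other non-trivial element of $L$.

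Next, at each branch value $v\in\{0,1,\infty\}$ the corresponding factor ($x$, or $x-1$, or the pole at infinity seen through the local parameter $1/x$) appears in every one of the $r$ defining equations, so a loop around $0$ or around $1$ flips every $y_{j}$ simultaneously and yields monodromy $c_{1}c_{2}\cdots c_{r}=c_{r+1}$. The monodromy at $\infty$ is then forced by the relation $\gamma_{0}\gamma_{1}\gamma_{\lambda_{1}}\cdots\gamma_{\lambda_{r}}\gamma_{\infty}=1$ in the fundamental group of the punctured sphere; since $L$ has exponent $2$, a direct computation returns $c_{r+1}$ once more. Therefore $L_{0}=L_{1}=L_{\infty}=\langle c_{r+1}\rangle$, each of these three fibers contains $2^{r-1}$ points, and every one of them is fixed by $c_{r+1}$.

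Combining the two computations, the non-trivial elements of $L$ lying in some inertia subgroup are exactly $c_{1},\ldots,c_{r}$ and $c_{r+1}$, and their $\pi$-images are precisely the sets claimed. The one technical point where I expect some care is needed is the justification that $\pi$ really is a regular branched cover with deck group $L$ and branch locus exactly $\{\infty,0,1,\lambda_{1},\ldots,\lambda_{r}\}$; this can be deduced either from a direct local analysis of the singular affine fiber product at $x\in\{\infty,0,1\}$ together with Riemann--Hurwitz, or, more efficiently, from the identification $X=F/K^{*}$ with $L=H_{0}/K^{*}$ already established earlier in Section~\ref{Sec:otraconstruccion}.
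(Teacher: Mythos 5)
Your proof is correct, and it reaches the lemma by a genuinely different route than the paper. The paper argues directly on the affine model: writing a non-trivial $c\in L$ as $c=\prod_j c_j^{\alpha_j}$, it observes that $c$ fixes $(x,y_1,\ldots,y_r)$ if and only if $y_j=0$ for every $j$ with $\alpha_j=1$; since $y_j$ vanishes only over $\{\infty,0,1,\lambda_j\}$ and the $\lambda_j$ are pairwise distinct, a fixed point lying over some $\lambda_j$ forces $c=c_j$, while the points over $\{\infty,0,1\}$ are simply asserted to "produce fixed points for $c_{r+1}$." You instead compute the local monodromy of the degree-$2^{r}$ Galois cover $\pi$ at each branch value ($c_j$ at $\lambda_j$, and $c_{r+1}=c_1\cdots c_r$ at $0$, $1$, $\infty$) and identify each stabilizer $L_v$ as the cyclic group generated by that monodromy. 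The two arguments are close in spirit, but yours is more careful precisely where the paper is terse: the points of the affine fiber product with $x\in\{\infty,0,1\}$ are singular (all $y_j$ vanish there simultaneously, so on the singular model the point $(0,0,\ldots,0)$ is fixed by \emph{every} element of $L$), and deciding which elements fix the points of the desingularization genuinely requires a local analysis. Your monodromy computation handles this automatically, since the smooth model over a punctured disk about $0$, $1$ or $\infty$ has $2^{r-1}$ components, each completed by one point whose stabilizer is exactly $\langle c_{r+1}\rangle$, all other non-trivial elements permuting these components without fixed points; this also recovers the fixed-point counts $2^{r-1}$ and $3\times 2^{r-1}$ stated after the lemma. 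Both proofs rest on the previously established facts that $X$ is irreducible and that $\pi$ is a regular branched cover with deck group $L$ branched exactly over $\{\infty,0,1,\lambda_1,\ldots,\lambda_r\}$; you correctly flag this as the required input and point to the identification $X=F/K^{*}$ as one way to justify it.
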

\begin{proof}
A non-trivial element of $L$ has the form
$c(x,y_{1},\ldots,y_{r})=(x,(-1)^{\alpha_{1}}y_{1},\ldots,(-1)^{\alpha_{r}}y_{r}),$
where $\alpha_{1},\ldots,\alpha_{r} \in \{0,1\}$ and $\alpha_{1}+\cdots+\alpha_{r}>0$.
A point $(x,y_{1},\ldots,y_{r}) \in C$ is a fixed point of $c$ if and only if $y_{j}=0$ for $\alpha_{j}=1$. The equality $y_{j}=0$ is equivalent to havimg $x \in \{\infty,0,1,\lambda_{j}\}$. The values $x \in \{\infty,0,1\}$ produce fixed points for $c_{r+1}$. Also, as we are assume the values $\lambda_{j}$ to be different, it follows that the only possibility is to have only one $j$ with $\alpha_{j}=1$.
\end{proof}

It can be seen that, for $j=1,\ldots,r$, ${\rm Fix}(c_{j})$ has cardinality $2^{r-1}$ and that ${\rm Fix}(c_{1}\cdots c_{r})$ has cardinality $3 \times 2^{r-1}$. In particular, for $r=3$, the surface $Y$ is hyperelliptic with $c_{1}c_{2}c_{3}$ as its hyperelliptic involution.

%%%%%%%%%%%%%%%%%%%%%%%%
%\subsubsection{Some subgroups of $L$}
If either $1 \leq k \leq r$ is odd or $4 \leq k \leq r$ is even,
and $\{i_{1},\ldots,i_{k}\} \subset \{1,\ldots,r\}$ with $1 \leq i_{1}<i_{2}< \cdots < i_{k} \leq r$, then we consider the subgroup
$$L_{i_{1},i_{2},\ldots,i_{k}}=\langle c_{i_{1}}c_{i_{2}}, c_{i_{1}}c_{i_{3}},\ldots,c_{i_{1}}c_{i_{k}},c_{j}; j\in\{1,\ldots,r\}-\{i_{1},\ldots,i_{k}\}\rangle \cong{\mathbb Z}_{2}^{r-1}. $$

Note that for $k=1$ we have the subgroups 
$L_{j}=\langle c_{1},\ldots,c_{j-1},c_{j+1},\ldots,c_{r}\rangle \cong {\mathbb Z}_{2}^{r-1}.$
If $$Q_{j}:Y \to E_{\lambda_{j}}: (x,y_{1},\ldots,y_{r}) \mapsto (x,y_{j}),$$
then $Q_{j}$ is a regular branched cover with deck group being $L_{j}$. The branch locus of $Q_{j}$ is the set
$$\{(\lambda_{j},y_{i}): y_{i}^{2}=\lambda_{i}(\lambda_{i}-1)(\lambda_{i}-\lambda_{j}), \; i=1,\ldots,r, i \neq j\}.$$

\begin{lemm} \label{lema2}
With the above notations, the following hold for the above defined subgroups.
\begin{enumerate}
\item Any two such subgroups commute.
\item The quotient $Y/L_{i_{1},i_{2},\ldots,i_{k}}$ is an orbifold of genus $(k+1)/2$ if $k$ is odd and genus $(k-2)/2$ if $k$ is even. Moreover, the underlying Riemann surface is given by the hyperelliptic curve 
$$w^{2}=\left\{\begin{array}{ll}
z(z-1)(z-\lambda_{i_{1}})(z-\lambda_{i_{2}})\cdots(z-\lambda_{i_{k}}), &  \mbox{ if $k$ is odd},\\
(z-\lambda_{i_{1}})(z-\lambda_{i_{2}})\cdots(z-\lambda_{i_{k}}), & \mbox{ if $k$ is even}.
\end{array}
\right.
$$

\item The group generated by any two different such subgroups is $L$.

\end{enumerate}

\end{lemm}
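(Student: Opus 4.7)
Part (1) is immediate because $L\cong {\mathbb Z}_{2}^{r}$ is abelian, so any two of its subgroups commute. Part (3) is also purely group-theoretic: each $L_{i_{1},\ldots,i_{k}}$ has index $2$ in $L$, and the subgroup generated by two distinct index-$2$ subgroups of an abelian group strictly contains each one, hence coincides with the ambient group.

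The substance lies in Part (2). My plan is to realize $Y:=X/L_{i_{1},\ldots,i_{k}}$ as a degree-$2$ cover of $X/L=\widehat{\mathbb C}$, coming from the inclusion $L_{i_{1},\ldots,i_{k}}\subset L$ of index $2$. Once the branch locus of $Y\to\widehat{\mathbb C}$ is identified inside the branch locus $\{\infty,0,1,\lambda_{1},\ldots,\lambda_{r}\}$ of $X\to X/L$ supplied by Lemma \ref{lema1}, the hyperelliptic presentation $w^{2}=\prod(z-\text{finite branch points})$ and the genus formula $(b-2)/2$ for $b$ branch points yield immediately both stated equations and both stated genera.

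To pin down the branch locus, take $c_{i_{1}}$ as a representative of the nontrivial coset of $L_{i_{1},\ldots,i_{k}}$ in $L$. A point $p\in X/L$ is a branch value of $Y\to X/L$ exactly when $c_{i_{1}}$ preserves the $L_{i_{1},\ldots,i_{k}}$-orbit above $p$, i.e.\ when $c_{i_{1}}\in L_{i_{1},\ldots,i_{k}}\cdot {\rm Stab}_{L}(x)$ for some (equivalently any) $x$ above $p$. Lemma \ref{lema1} says ${\rm Stab}_{L}(x)$ is trivial off the branch locus of $X\to X/L$, equals $\langle c_{j}\rangle$ over $\lambda_{j}$, and equals $\langle c_{r+1}\rangle=\langle c_{1}\cdots c_{r}\rangle$ over each of $\infty,0,1$. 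Using that $L_{i_{1},\ldots,i_{k}}$ consists precisely of the elements $c_{1}^{\beta_{1}}\cdots c_{r}^{\beta_{r}}$ with $\sum_{\ell=1}^{k}\beta_{i_{\ell}}$ even, a short parity check gives $c_{i_{1}}c_{j}\in L_{i_{1},\ldots,i_{k}}$ iff $j\in\{i_{1},\ldots,i_{k}\}$, and $c_{i_{1}}c_{r+1}\in L_{i_{1},\ldots,i_{k}}$ iff $k$ is odd.

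It follows that the branch locus of $Y\to\widehat{\mathbb C}$ is $\{\lambda_{i_{1}},\ldots,\lambda_{i_{k}}\}\cup\{\infty,0,1\}$ when $k$ is odd (total $k+3$ points, giving genus $(k+1)/2$ and, since $\infty$ belongs to it, the odd-degree equation $w^{2}=z(z-1)(z-\lambda_{i_{1}})\cdots(z-\lambda_{i_{k}})$) and $\{\lambda_{i_{1}},\ldots,\lambda_{i_{k}}\}$ when $k$ is even (total $k$ points, giving genus $(k-2)/2$ and the even-degree equation $w^{2}=(z-\lambda_{i_{1}})\cdots(z-\lambda_{i_{k}})$). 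The only slightly delicate step is the coset/parity computation above; everything else is routine Riemann--Hurwitz for a branched double cover of $\widehat{\mathbb C}$.
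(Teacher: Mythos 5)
Your proposal is correct and takes essentially the same route as the paper, whose proof of part (2) simply says it ``follows from Riemann--Hurwitz formula and Lemma \ref{lema1}'' and dismisses (1) and (3) as trivial; your coset/parity computation identifying the branch locus of the degree-two cover $X/L_{i_{1},\ldots,i_{k}}\to X/L$ is exactly the detail the paper leaves implicit, and it checks out (branching over $\lambda_{j}$ iff $j\in\{i_{1},\ldots,i_{k}\}$, and over $\infty,0,1$ iff $k$ is odd).
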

\begin{proof}
Property (1) holds trivially as $L$ is an abelian group. Property (2) follows from Riemann-Hurwitz formula and Lemma \ref{lema1}. Property (3) is clear.
\end{proof}

The next result states that the sum of the genera appearing in all quotients of the form $Y/L_{i_{1},i_{2},\ldots,i_{k}}$ is equal to the genus of $Y$.

\begin{lemm}\label{lema3}
$\sum_{k=1}^{r} \binom{r}{k} \frac{(1-(-1)^k)}{2} \frac{(k+1)}{2} + 
\sum_{k=4}^{r} \binom{r}{k} \frac{(1+(-1)^k)}{2} \frac{(k-2)}{2}
=1+2^{r-2}(r-1).$
\end{lemm}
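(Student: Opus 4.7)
The plan is to mimic the generating function manipulation used to prove Lemma \ref{lema03}, splitting the left-hand side into its odd-$k$ and even-$k$ parts. After noting that $(1-(-1)^{k})/2$ is the indicator of $k$ odd and $(1+(-1)^{k})/2$ is the indicator of $k$ even, I would denote
$$\Sigma_{1}=\sum_{\substack{1\leq k\leq r\\ k \text{ odd}}}\binom{r}{k}\frac{k+1}{2},\qquad \Sigma_{2}=\sum_{\substack{4\leq k\leq r\\ k \text{ even}}}\binom{r}{k}\frac{k-2}{2},$$
so that the target identity reads $\Sigma_{1}+\Sigma_{2}=1+2^{r-2}(r-1)$.

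Next I would introduce the two auxiliary polynomials
$$F(x)=(1+x)^{r}+rx(1+x)^{r-1}=\sum_{k=0}^{r}\binom{r}{k}(k+1)x^{k},$$
$$G(x)=rx(1+x)^{r-1}-2(1+x)^{r}=\sum_{k=0}^{r}\binom{r}{k}(k-2)x^{k},$$
and use the standard identities $\sum_{k\text{ odd}}a_{k}=(P(1)-P(-1))/2$ and $\sum_{k\text{ even}}a_{k}=(P(1)+P(-1))/2$ for $P(x)=\sum a_{k}x^{k}$. For $r\geq 2$ direct evaluation gives $F(1)=2^{r-1}(r+2)$, $G(1)=2^{r-1}(r-4)$ and $F(-1)=G(-1)=0$, hence
$$2\Sigma_{1}=\frac{F(1)-F(-1)}{2}=2^{r-2}(r+2).$$
For $\Sigma_{2}$ one takes the even-index sum of $G$, which equals $2^{r-2}(r-4)$, and then subtracts the two forbidden low-$k$ contributions: the $k=0$ term is $\binom{r}{0}(0-2)=-2$ and the $k=2$ term is $\binom{r}{2}(2-2)=0$, yielding
$$2\Sigma_{2}=2^{r-2}(r-4)-(-2)-0=2^{r-2}(r-4)+2.$$

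Adding the two contributions,
$$2(\Sigma_{1}+\Sigma_{2})=2^{r-2}(r+2)+2^{r-2}(r-4)+2=2^{r-2}(2r-2)+2=2^{r-1}(r-1)+2,$$
so that $\Sigma_{1}+\Sigma_{2}=1+2^{r-2}(r-1)$, which is exactly what is claimed. The only genuine subtlety in the computation is the careful removal of the $k=0$ and $k=2$ terms from the even-index sum; once that bookkeeping is done the argument is completely parallel to the proof of Lemma \ref{lema03}, and there is no conceptual obstacle.
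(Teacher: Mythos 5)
Your proof is correct, and it is essentially the paper's own argument: the paper also splits the sum by parity and extracts the odd-$k$ and even-$k$ parts by evaluating at $x=\pm 1$ the polynomials $\frac{1}{4}\sum_{k}\binom{r}{k}(k+1)x^{k}$ and $\frac{1}{4}\sum_{k}\binom{r}{k}(k-2)x^{k-3}$ (obtained there as derivatives of $x(1+x)^{r}/4$ and $(1+x)^{r}/(4x^{2})$), with the same $k=0$ correction term that you handle explicitly. Your polynomials $F$ and $G$ are just rescalings of those derivatives, so the two computations coincide; if anything, your version of the even-$k$ extraction is slightly cleaner, since working with $G(x)=\sum_{k}\binom{r}{k}(k-2)x^{k}$ rather than a series in $x^{k-3}$ avoids a sign subtlety in the exponent parity that the paper glosses over (harmlessly, since the value at $x=-1$ vanishes).
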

\begin{proof}
If  
$f_{1}(x)=x\frac{(1+x)^{r}}{4}=\frac{1}{4}\sum_{k=0}^{r} \binom{r}{k} x^{k+1}$ and 
$f_{2}(x)=\frac{(1+x)^{r}}{4x^{2}}= \frac{1}{4}\sum_{k=0}^{r} \binom{r}{k} x^{k-2}$, then 
$f'_{1}(x)=\frac{(1+x)^{r}(1+(1+r)x)}{4}=\frac{1}{4}\sum_{k=0}^{r} \binom{r}{k} (k+1) x^{k}$ and 
$f'_{2}(x)=\frac{(1+x)^{r-1}(r-2x(1+x))}{4x^{2}}=\frac{1}{4}\sum_{k=0}^{r} \binom{r}{k} (k-2)x^{k-3}$. Then
$2^{r-3}(r+2)=f'_{1}(1)-f'_{1}(-1)=\frac{1}{4}\sum_{k=0}^{r} \binom{r}{k} (k+1)(1-(-1)^{k})=\sum_{k=1}^{r} \binom{r}{k} \frac{(1-(-1)^{k})}{2} \frac{(k+1)}{2}$ and 
$2^{r-3}(r-4)=f'_{2}(1)+f'_{2}(-1)=\frac{1}{4}\sum_{k=0}^{r} \binom{r}{k} (k-2)(1+(-1)^{k})=-1+\sum_{k=4}^{r} \binom{r}{k} \frac{(1+(-1)^k)}{2} \frac{(k-2)}{2}$.
By adding these two equalities we obtain the desired result.
\end{proof}

We may now apply Theorem \ref{coroKR} for $Y$ using the subgroups $L_{i_{1},\ldots,i_{k}}$ in order to obtain that
$JY$ is isogenous to a product of the form $E_{\lambda_{1}} \times \cdots \times E_{\lambda_{r}} \times A_{g-r}$, where  $A_{g-r}$ is the product of the jacobian varieties of all elliptic/hyperelliptic Riemann surfaces $Y/L_{i_{1},\ldots,i_{k}}$, for $k \geq 2$.

\begin{rema}[A fiber product description of $Y$]
We may generalize the fiber product description done for $r=2$ in Remark \ref{observa2}, for $r \geq 2$, as follows.
Let $\pi_{j}:E_{\lambda_{j}} \to \widehat{\mathbb C}$ be defined by $\pi_{j}(x_{j},y_{j})=x_{j}$. The locus of branch values of $\pi_{j}$ is the set $\{\infty,0,1,\lambda_{j}\}$.
An affine model of the fiber product of the $r$ pairs $(E_{\lambda_{1}},\pi_{1}),\ldots,(E_{\lambda_{r}},\pi_{r})$ is 
$$\widehat{Y}=\left\{(x,y_{1},\ldots,y_{r}): y_{j}^{2}=x(x-1)(x-\lambda_{j}), \; j=1,\ldots,r \right\},$$
which is irreducible and has singular points at those points with first coordinate $x\in \{\infty,0,1\}$. The Riemann surface $T$ defined by $\widehat{Y}$ (after desingularization) admits a group of conformal automorphisms $H \cong {\mathbb Z}_{2}^{r}$ so that $T/H$ is the Riemann sphere with conical points (each one of order two) at $\infty$, $0$, $1$, $\lambda_{1},\ldots, \lambda_{r}$. In particular, it has genus $g=1+2^{r-2}(r-1)$. It can be checked that $T$ is isomorphic to the surface $Y$ of Theorem \ref{construccion2}.
\end{rema}

%%%%%%%%%%%%%%%%%%%%%%%%
%%%%%%%%%%%%%%%%%%%%%%%%
\section{Explicit examples}

%%%%%%%%%%%%%%%%
\subsection{Example of genus five} \label{g=5}
In Theorem \ref{construccion2}, assume $r=3$.
In this case, $Y$ has genus five and $JY$ is isogenous to $E_{\lambda_{1}} \times E_{\lambda_{2}} \times E_{\lambda_{3}} \times JT$, where 
$E_{\lambda_{1}}: y^{2}=x(x-1)(x-\lambda_{1})$,
$E_{\lambda_{2}}: y^{2}=x(x-1)(x-\lambda_{2})$, 
$E_{\lambda_{3}}: y^{2}=x(x-1)(x-\lambda_{3})$ and 
$T: y^{2}=x(x-1)(x-\lambda_{1})(x-\lambda_{2})(x-\lambda_{3})$.
The corresponding subgroups of $H$ are in this case
$L_{1}=\langle c_{2},c_{3}\rangle$, 
$L_{2}=\langle c_{1},c_{3}\rangle$, 
$L_{3}=\langle c_{1},c_{2}\rangle$ and 
$L_{1,2,3}=\langle c_{1}c_{2}, c_{1}c_{3}\rangle.$

\begin{coro}
If $\lambda_{3}=\lambda_{1}/\lambda_{2}$, then $JY$ isogenous to the product of $5$ elliptic curves. So, it provides a $2$-dimensional family of curves $Y$ of genus five with $JY$ isogenous to the product of five elliptic curves.
\end{coro}
\begin{proof}
If $\lambda_{3}=\lambda_{1}/\lambda_{2}$, then $T$ admits the involution $(x,y) \mapsto (\lambda_{1}/x, \lambda_{1}^{3/2} y/x^{3})$,  with exactly two fixed points. It follows that $JT$ is isogenous to the product of two elliptic curves.
\end{proof}

%%%%%%%%%%
\subsection{Example of genus nine}\label{coro3}
Next, we provide a two-dimensional family of genus nine Riemann surfaces whose jacobian varieties are isogenous to the product of nine elliptic curves.
For $(\lambda, \mu) \in \Delta_{2}$ set 
$\mu_{1,1}=\mu, \; \mu_{1,2}=\lambda/\mu$, $\mu_{2,1}=\lambda(\mu-1)/(\mu-\lambda)$, $\mu_{2,2}=(\mu-\lambda)/(\mu-1)$,
$$
\begin{array}{ll}
K_{1}=(\mu_{2,2}-\mu_{1,1})(\mu_{2,2}-\mu_{1,2})(\mu_{2,2}-\mu_{2,1}),&
K_{2}=(\mu_{2,2}-1)(\mu_{2,2}-\lambda)(\mu_{2,2}-\mu_{2,1}),\\
K_{3}=(\mu_{2,2}-1)(\mu_{2,2}-\lambda)(\mu_{2,2}-\mu_{1,1})(\mu_{2,2}-\mu_{1,2}), &
K_{4}=-\mu_{2,2}(\mu_{2,2}-\mu_{2,1}),\\
K_{5}=-\mu_{2,2}(\mu_{2,2}-\mu_{1,1})(\mu_{2,2}-\mu_{1,2}), &
K_{6}=-\mu_{2,2}(\mu_{2,2}-1)(\mu_{2,2}-\lambda).
\end{array}
$$

Let $R$ be the genus nine Riemann surface defined by the curve
$$\left\{\begin{array}{lcl}
w_{1}^{2}&=&K_{1}
\left(z-\dfrac{1}{\mu_{1,1}-\mu_{2,2}} \right)
\left(z-\dfrac{1}{\mu_{1,2}-\mu_{2,2}} \right)
\left(z-\dfrac{1}{\mu_{2,1}-\mu_{2,2}} \right)  \\
w_{2}^{2}&=&K_{2}
\left(z-\dfrac{1}{1-\mu_{2,2}} \right)
\left(z-\dfrac{1}{\lambda-\mu_{2,2}} \right)
\left(z-\dfrac{1}{\mu_{2,1}-\mu_{2,2}} \right)  \\
w_{3}^{2}&=&K_{3}
\left(z-\dfrac{1}{1-\mu_{2,2}} \right)
\left(z-\dfrac{1}{\lambda-\mu_{2,2}} \right)
\left(z-\dfrac{1}{\mu_{1,1}-\mu_{2,2}} \right)
\left(z-\dfrac{1}{\mu_{1,2}-\mu_{2,2}} \right)  \\
w_{4}^{2}&=&K_{4}
z\left(z+\dfrac{1}{\mu_{2,2}} \right)
\left(z-\dfrac{1}{\mu_{2,1}-\mu_{2,2}} \right)  \\
w_{5}^{2}&=&K_{5}
z\left(z+\dfrac{1}{\mu_{2,2}} \right)
\left(z-\dfrac{1}{\mu_{1,1}-\mu_{2,2}} \right)
\left(z-\dfrac{1}{\mu_{1,2}-\mu_{2,2}} \right)  \\
w_{6}^{2}&=&K_{6}
z\left(z+\dfrac{1}{\mu_{2,2}} \right)
\left(z-\dfrac{1}{1-(\mu_{2,2}} \right)
\left(z-\dfrac{1}{\lambda-\mu_{2,2}} \right)  \\
w_{7}^{2}&=&w_{3}^{2}w_{4}^{2}
\end{array}
\right\}
$$
By Theorem \ref{construccion} (and Lemma \ref{lema02}), $JR \cong_{isog.} E_{1} \times \cdots \times E_{6} \times JT$,  where  
$E_{1}:\; y^{2}=x(x-1)(x-\lambda)$, $E_{2}:\; y^{2}=(x-1)(x-\lambda)(x-\mu_{1,1})(x-\mu_{1,2})$,
$E_{3}:\; y^{2}=(x-\mu_{1,1})(x-\mu_{1,2})(x-\mu_{2,1})(x-\mu_{2,2})$, $E_{4}:\; y^{2}=x(x-\mu_{2,1})(x-\mu_{2,2})$,
$E_{5}:\; y^{2}=x(x-\mu_{1,1})(x-\mu_{1,2})$, $E_{6}:\; y^{2}=(x-1)(x-\lambda)(x-\mu_{2,1})(x-\mu_{2,2})$
and
$T: y^{2}=x(x-1)(x-\lambda)(x-\mu_{1,1})(x-\mu_{1,2})(x-\mu_{2,1})(x-\mu_{2,2}).$
The group $J=\langle f_{1}(x)=\lambda/x, f_{2}(x)=\lambda (x-1)/(x-\lambda) \rangle \cong {\mathbb Z}_{2}^{2}$ keeps invariant the set 
$\{\infty, 0, 1, \lambda, \mu_{1,1}, \mu_{1,2}, \mu_{2,1}, \mu_{2,2}\}$. In this way, $T$ admits the following automorphisms
$$A_{1}(x,y)=\left( \lambda / x, \lambda^{2} y / x^{4}\right), \;
A_{2}(x,y)=\left( \lambda(x-1)/(x-\lambda), \lambda^{2}(\lambda-1)^{2} y/(x-\lambda)^{4}\right).$$

We may see that $\langle A_{1}, A_{2}\rangle \cong {\mathbb Z}_{2}^{2}$ and that each of the involutions $A_{1}$, $A_{2}$ and $A_{1}\circ A_{2}$ acts with exactly $4$ fixed points on $T$. The quotients $T/\langle A_{1} \rangle$, $T/\langle A_{2} \rangle$ and $T/\langle A_{1} \circ A_{2}\rangle$ have genus one. We may apply Theorem \ref{coroKR} to $T$ using the three cyclic groups of order two in order to see that $JT$ is isogenous to the product of three elliptic curves. In this way, 
$JR$ is isogenous to the product of nine elliptic curves.

%%%%%%%%%%%%%%%%%%
\subsection{Example of genus thirteen}\label{g=13}
Set, in Theorem \ref{construccion2}, the case $r=4$.
In this case, $Y$ has genus $13$ and $JY$ is isogenous to  $E_{\lambda_{1}} \times E_{\lambda_{2}} \times E_{\lambda_{3}} \times E_{\lambda_{4}} \times JT_{1} \times JT_{2} \times JT_{3} \times JT_{4} \times E_{5}$, where 
$E_{\lambda_{1}}: y^{2}=x(x-1)(x-\lambda_{1})$,
$E_{\lambda_{2}}: y^{2}=x(x-1)(x-\lambda_{2})$, 
$E_{\lambda_{3}}: y^{2}=x(x-1)(x-\lambda_{3})$,
$E_{\lambda_{4}}: y^{2}=x(x-1)(x-\lambda_{4})$,   
$T_{1}: y^{2}=x(x-1)(x-\lambda_{1})(x-\lambda_{2})(x-\lambda_{3}),\; T_{2}: y^{2}=x(x-1)(x-\lambda_{1})(x-\lambda_{2})(x-\lambda_{4})$, 
$T_{3}: y^{2}=x(x-1)(x-\lambda_{1})(x-\lambda_{3})(x-\lambda_{4}),\; T_{4}: y^{2}=x(x-1)(x-\lambda_{2})(x-\lambda_{3})(x-\lambda_{4})$ and
$E_{5}: y^{2}=(x-\lambda_{1})(x-\lambda_{2})(x-\lambda_{3})(x-\lambda_{4})$.
The corresponding subgroups of $H$ are in this case
$L_{1}=\langle c_{2},c_{3},c_{4}\rangle$,
$L_{2}=\langle c_{1},c_{3},c_{4}\rangle$, 
$L_{3}=\langle c_{1},c_{2},c_{4}\rangle$, 
$L_{4}=\langle c_{1},c_{2},c_{3}\rangle$,
$L_{1,2,3}=\langle c_{1}c_{2}, c_{1}c_{3}, c_{4}\rangle$, 
$L_{1,2,4}=\langle c_{1}c_{2}, c_{1}c_{4}, c_{3}\rangle$, 
$L_{1,3,4}=\langle c_{1}c_{3}, c_{1}c_{4}, c_{2}\rangle$,
$L_{2,3,4}=\langle c_{2}c_{3}, c_{2}c_{4}, c_{1}\rangle$ and  
$L_{1,2,3,4}=\langle c_{1}c_{2},c_{1}c_{3},c_{1}c_{4}\rangle$.

\begin{coro}
If $\lambda_{3}=\lambda_{1}/\lambda_{2}$, $\lambda_{4}=\lambda_{1}(\lambda_{2}-1)/(\lambda_{2}-\lambda_{1})$ and $\lambda_{2}^{2}(1+\lambda_{1})-4\lambda_{1}\lambda_{2}+\lambda_{1}(1+\lambda_{1})=0$, then $JY$ isogenous to the product of $13$ elliptic curves. 
\end{coro}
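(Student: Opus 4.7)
The plan is to invoke Theorem \ref{construccion2} to reduce the problem to decomposing four genus-two jacobians, and then to exhibit an extra involution on each $S_{i}$ using the three algebraic hypotheses on $(\lambda_{1},\lambda_{2},\lambda_{3},\lambda_{4})$. Specifically, Theorem \ref{construccion2} (applied with $r=4$) together with the computations in Section \ref{casog=3} gives
$$JX\;\sim_{\mathrm{isog.}}\; E_{\lambda_{1}}\times E_{\lambda_{2}}\times E_{\lambda_{3}}\times E_{\lambda_{4}}\times E_{5}\times JS_{1}\times JS_{2}\times JS_{3}\times JS_{4},$$
so five elliptic factors are already in place; it therefore suffices to show that each of the genus-two jacobians $JS_{i}$ splits as the product of two elliptic curves. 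For a genus-two Riemann surface such a splitting is equivalent to the existence of a non-hyperelliptic involution $\alpha_{i}\in\mathrm{Aut}(S_{i})$, and this in turn is equivalent to the existence of a Möbius involution of $\widehat{\mathbb C}$ permuting the six Weierstrass branch values of $S_{i}$. Once $\alpha_{i}$ is produced, Proposition \ref{coroKR} applied to the two subgroups $\langle\alpha_{i}\rangle$ and $\langle\alpha_{i}\iota_{i}\rangle$ of the Klein four-group $\{1,\alpha_{i},\iota_{i},\alpha_{i}\iota_{i}\}$ (with $\iota_{i}$ the hyperelliptic involution of $S_{i}$) yields $JS_{i}\sim E_{i}'\times E_{i}''$, because each of the two quotients has genus one while $S_{i}/\langle\alpha_{i},\iota_{i}\rangle$ has genus zero.

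For $S_{1}$ (branch set $\{\infty,0,1,\lambda_{1},\lambda_{2},\lambda_{3}\}$) the Möbius involution $T_{1}(x)=\lambda_{1}/x$ pairs $\{0,\infty\}$, $\{1,\lambda_{1}\}$, and, thanks to the hypothesis $\lambda_{3}=\lambda_{1}/\lambda_{2}$, also $\{\lambda_{2},\lambda_{3}\}$. For $S_{2}$ (branch set $\{\infty,0,1,\lambda_{1},\lambda_{2},\lambda_{4}\}$) the involution $T_{2}(x)=\lambda_{1}(x-1)/(x-\lambda_{1})$ pairs $\{0,1\}$, $\{\infty,\lambda_{1}\}$, and, thanks to $\lambda_{4}=\lambda_{1}(\lambda_{2}-1)/(\lambda_{2}-\lambda_{1})$, also $\{\lambda_{2},\lambda_{4}\}$. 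A short direct computation shows that $T_{1}$ and $T_{2}$ commute, so $T_{3}:=T_{1}\circ T_{2}$ is again a Möbius involution; in coordinates $T_{3}(x)=(x-\lambda_{1})/(x-1)$, and one checks that $T_{3}$ pairs $\{0,\lambda_{1}\}$, $\{1,\infty\}$, and (as a formal consequence of the two previous defining relations) $\{\lambda_{3},\lambda_{4}\}$, so $T_{3}$ handles $S_{3}$ with no new input.

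The only genuinely new ingredient is needed for $S_{4}$, whose branch set $\{\infty,0,1,\lambda_{2},\lambda_{3},\lambda_{4}\}$ no longer contains $\lambda_{1}$. The candidate involution is
$$T_{4}(x)=\frac{\lambda_{2}(x-\lambda_{3})}{x-\lambda_{2}},$$
which by construction realizes the pairings $\{0,\lambda_{3}\}$ and $\{\infty,\lambda_{2}\}$; the remaining requirement $T_{4}(1)=\lambda_{4}$ becomes, after clearing denominators and substituting the explicit expressions for $\lambda_{3}$ and $\lambda_{4}$, the identity
$$(\lambda_{2}-\lambda_{1})^{2}+\lambda_{1}(\lambda_{2}-1)^{2}=0,$$
which expands to exactly $\lambda_{2}^{2}(1+\lambda_{1})-4\lambda_{1}\lambda_{2}+\lambda_{1}(1+\lambda_{1})=0$, i.e.\ the third hypothesis. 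Feeding each of the four involutions $T_{1},T_{2},T_{3},T_{4}$ into Proposition \ref{coroKR} produces $4\times 2=8$ additional elliptic factors, so $JX$ is isogenous to a product of $5+8=13$ elliptic curves, as claimed.

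The bookkeeping in the first paragraph is formal (it is just Theorem \ref{construccion2} and the standard splitting of a bielliptic genus-two jacobian through Kani--Rosen), the existence of $T_{1},T_{2}$ is an immediate substitution, and $T_{3}=T_{1}T_{2}$ is free. The one computation that actually consumes all three hypotheses is the equivalence ``$T_{4}(1)=\lambda_{4}\Longleftrightarrow$ the given quadratic in $\lambda_{1},\lambda_{2}$'', and that is where I would spend the time double-checking the algebra.
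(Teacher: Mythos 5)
Your proposal is correct and takes essentially the same route as the paper: the same decomposition of $JX$ from Section \ref{casog=3}, and the very same M\"obius involutions (your $T_{1},T_{2},T_{3},T_{4}$ are exactly the paper's $a_{1}$, $a_{2}$, $a_{2}a_{1}$ and $a_{3}(z)=\lambda_{2}(z-\lambda_{3})/(z-\lambda_{2})$) pairing the six branch values of each $S_{i}$. You merely make explicit two things the paper leaves implicit, namely the Kani--Rosen splitting of each bielliptic genus-two jacobian via $\langle\alpha_{i}\rangle$ and $\langle\alpha_{i}\iota_{i}\rangle$, and the algebra showing that $T_{4}(1)=\lambda_{4}$ is equivalent to the quadratic hypothesis on $\lambda_{1},\lambda_{2}$.
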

\begin{proof}
If $a_{1}(z)=\lambda_{1}/z$ and $a_{2}(z)=\lambda_{1}(z-1)/(z-\lambda_{1})$, then the group generated by them is isomorphic to ${\mathbb Z}_{2}^{2}$. Since $a_{1}$ permutes in pairs the elements in $\{\infty,0,1,\lambda_{1},\lambda_{2},\lambda_{3}\}$, it follows that $JT_{1}$ is isogenous to the product of two elliptic curves. Similarly, as $a_{2}$ permutes in pairs the elements in $\{\infty,0,1,\lambda_{1},\lambda_{2},\lambda_{4}\}$, it follows that $JT_{2}$ is isogenous to the product of two elliptic curves and as 
$a_{2}a_{1}$ permutes in pairs the elements in $\{\infty,0,1,\lambda_{1},\lambda_{3},\lambda_{4}\}$, it follows that $JT_{3}$ is isogenous to the product of two elliptic curves. In this way, under the above assumptions, $JY$ is isogenous to the product of $11$ elliptic curves and $JT_{4}$. If we also assume that $\lambda_{2}^{2}(1+\lambda_{1})-4\lambda_{1}\lambda_{2}+\lambda_{1}(1+\lambda_{1})=0$, then $a_{3}(z)=\lambda_{2}(z-\lambda_{3})/(z-\lambda_{2})$ permutes in pairs the elements of the set $\{\infty,0,1,\lambda_{2},\lambda_{3},\lambda_{4}\}$. In this case, $JT_{4}$ is also isogenous to the product of two elliptic curves\end{proof}

\begin{rema}
Examples of values as above are $\lambda_{1}=2$ and $\lambda_{2}=(4+i\sqrt{2})/3$; so $\lambda_{3}=(4-i\sqrt{2})/3$ and $\lambda_{4}=-i\sqrt{2}$.
\end{rema}

%%%%%%%%%%%%%%%%%%%
%%%%%%%%%%%%%%%%%%%
\bibliographystyle{amsplain}

\begin{thebibliography}{99}

\bibitem{BL}
D. J. Bernstein and T. Lange.
Hyper-and-elliptic-curve cryptography. 
{\it LMS J. Comput. Math.} {\bf 17} (suppl. A): (2014), 181--202.


\bibitem{Cardona}
G. Cardona.
${\mathbb Q}$-curves and abelian varieties of ${\rm GL}_{2}$-type from dihedral genus $2$ curves.
In Cremona et al. [6], 2004, pp. 45–52.

\bibitem{C-G-H-R}
A. Carocca, V. Gonz\'alez, R. A. Hidalgo and R. Rodr\'{\i}guez.
Generalized Humbert Curves.
{\it Israel Journal of Mathematics} {\bf 64}: 1 (2008), 165--192.


\bibitem{CR}
A. Carocca, R. E. Rodr\'{\i}guez. 
Jacobians with group actions and rational idempotents. 
{\it J. Algebra} {\bf 30}: (2006), 322--343.


\bibitem{CHQ}
M. Carvacho, R. A. Hidalgo and S. Quispe.
Jacobian varieties of generalized Fermat curves.
{\it Quarterly Journal of Math.} {\bf 67}: (2016), 261--284.

\bibitem{DK}
I. Duursma and N. Kiyavash. 
The vector decomposition problem for elliptic and hyper-elliptic curves.
{\it  J. Ramanujan Math. Soc.} {\bf 20}: 1 (2005), 59--76.


\bibitem{Earle}
C. J. Earle.
Some Jacobian varieties which split.
{\it Lecture Notes in Mathematics} {\bf 747}: (1979), 101--107.


\bibitem{Earle2}
C. J. Earle. 
The Genus Two Jacobians That Are Isomorphic to a Product of Elliptic Curves. 
{\it Contemp. Math.} {\bf 397}: (2006), 27--36.


\bibitem{E-S}
T. Ekedahl and J.-P. Serre.
Exemples de courbes alg\'ebriques \`a jacobienne compl\`etement d\'ecomposable.
{\it C. R. Acad. Sci. Pari S\'er. I Math.} {\bf 317}: 5 (1993), 509--513.

\bibitem{FK}
G. Frey and E. Kani. 
Curves of genus $2$ covering elliptic curves and an arithmetical application. 
In {\it Arithmetic algebraic geometry} (Texel, 1989) {\bf 89} of {\it Progr. Math.} (1991), 153--176. Birkhäuser Boston, Boston, MA.


\bibitem{Gaudry}
P. Gaudry and \'E. Schost.
On the Invariants of the Quotients of the Jacobian of a Curve of Genus $2$.
Applied Algebra, Algebraic Algorithms and Error-Correcting Codes.
{\it Lecture Notes in Computer Science} {\bf 2227}: (2001), 373--386


 \bibitem{HN}
 T. Hayashida and M. Nishi. 
Existence of curves of genus two on a product of two elliptic curves. 
 {\it J. Math. Soc. Japan} {\bf 17}: (1965), 1--16.
 

\bibitem{Hermite}
C. Hermite.
Sur un example de r\'eduction d'int\'egrales ab\'eliennes aux fonctions elliptiques.
{\it Ann. Soc. Sci. Bruxelles} {\bf 1}: (1876), 1--16.

\bibitem{H-K-L-P}
R. A. Hidalgo, A. Kontogeorgis, M. Leyton-Alvarez and P. Paramantzoglou.
Automorphisms of generalized Fermat curves .
{\it Journal of Pure and Applied Algebra} {\bf 221} (2017), 2312--2337.


\bibitem{Kani}
E. Kani.
Elliptic curves on abelian surfaces. 
{\it Manuscripta Math.} {\bf 84} (1994), 199--223.


\bibitem{K-R}
E. Kani and M. Rosen.
Idempotent relations and factors of Jacobians.
{\it Math. Ann.} {\bf 284}: 2 (1989), 307--327.


\bibitem{LR}
H. Lange and S. Recillas. 
Abelian varieties with group action. 
{\it J. reine angew. Math.} {\bf 575}: (2004),135--155.


\bibitem{Magaard}
K. Magaard, T. Shaska, and H. V\"olklein. 
Genus $2$ curves that admit a degree $5$ map to an elliptic curve. 
{\it Forum Math.} {\bf 3} (2009), 547--566.


\bibitem{Nakajima}
Nakajima, Ryo.
On splitting of certain Jacobian varieties.
{\it J. Math. Kyoto Univ.} {\bf 47}: 2 (2007), 391--415. 


\bibitem{Paulhus1}
J. Paulhus. 
Decomposing Jacobians of curves with extra automorphisms.
{\it Acta Arith.} {\bf 132}: 3 (2008), 231--244.

\bibitem{Paulhus2}
J. Paulhus.
Elliptic factors in Jacobians of hyperelliptic curves with certain automorphism groups.
THE OPEN BOOK SERIES 1 (2013).
Tenth Algorithmic Number Theory Symposium msp dx.doi.org/10.2140/obs.2013.1.487


\bibitem{PR}
J. Paulhus and A. Rojas.
Completely Decomposable Jacobian Varieties in New Genera.
{\it Expermiental Mathematics} {\bf 26}: 4 (2017), 430--445.  


\bibitem{RR}
G. Riera and R. E. Rodr\'{\i}guez. 
The period matrix of Bring's curve.
{\it PaciÞc J. Math.} {\bf 154}: 1 (1992), 179--200.

\bibitem{CRR}
Ch. Ritzenthaler and M. Romagny.
On the Prym variety of genus $3$ covers of genus$1$ curves.
{\it \'Epijournal Geom. Alg\'ebrique} {\bf 2}: (2018), Art. 2, 8 pp.
 

\bibitem{Anita}
 A. M. Rojas. 
Group actions on Jacobian varieties. 
{\it Rev. Mat. Iber.} {\bf 23}: (2007), 397--420. 

\bibitem{Yamauchi}
T. Yamauchi,.
On ${\mathbb Q}$-simple factors of Jacobian varieties of modular curves.
{\it Yokohama Math. J.} {\bf 53}  No. 2 (2007), 149--160.


\end{thebibliography}

\end{document}